\documentclass{amsart}

\usepackage{graphics}

\usepackage[T1]{fontenc}    % use 8-bit T1 fonts

\usepackage{amsthm}
\usepackage{amsbsy,amsmath,amssymb,amscd,amsfonts}
\usepackage[pagebackref=true]{hyperref}
\usepackage[nameinlink,capitalize,noabbrev]{cleveref}
\usepackage{url}            % simple URL typesetting
\usepackage{booktabs}       % professional-quality tables
\usepackage{nicefrac}       % compact symbols for 1/2, etc.
\usepackage{microtype}      % microtypography

\usepackage{graphicx,float,latexsym,color}
\usepackage[font={small,it}]{caption}
\usepackage{subcaption}

\usepackage{makecell}

\usepackage[dvipsnames]{xcolor}

\newtheorem{theorem}{Theorem}
\newtheorem{observation}{Observation}
\newtheorem{proposition}{Proposition}
\newtheorem{conjecture}{Conjecture}
\newtheorem{remark}{Remark}
\newtheorem{corollary}{Corollary}

\hypersetup{
    %bookmarks=true,         % show bookmarks bar?
    %unicode=true,          % non-Latin characters in Acrobat’s bookmarks
    pdftoolbar=true,        % show Acrobat’s toolbar?
    pdfmenubar=true,        % show Acrobat’s menu?
    pdffitwindow=false,     % window fit to page when opened
    pdfstartview={FitH},    % fits the width of 
    colorlinks=true,       % false: boxed links; true: colored links
    linkcolor=OliveGreen,          % color of internal links (change box color with linkbordercolor)
    citecolor=blue,        % color of links to bibliography
    filecolor=black,      % color of file links
    urlcolor=red           % color of external links
}

\usepackage{lineno}

%a4: 210 x 297
%\textwidth=125mm
%\textheight=195mm
\arraycolsep=2pt
\captionsetup{width=120mm}

\title[]{The Talented Mr. Inversive Triangle\\in the Elliptic Billiard}

\author{Dan Reznik$^*$}
\thanks{D. Reznik$^*$, Data Science Consulting, \texttt{dreznik@gmail.com}}
\author{Ronaldo Garcia} 
\thanks{R. Garcia, Federal Univ. of Goiás, \texttt{ragarcia@ufg.br}}
\author{Mark Helman}
\thanks{M. Helman, Rice University, \texttt{markhelman@hotmail.com}}

%\date{November, 2020}

\begin{document}

\maketitle

\begin{abstract}
Inverting the vertices of elliptic billiard N-periodics with respect to a circle centered on one focus yields a new ``focus-inversive'' family inscribed in Pascal's Limaçon. The following are some of its surprising invariants: (i) perimeter, (ii) sum of cosines, and (iii) sum of distances from inversion center (the focus) to vertices. We prove these for the N=3 case, showing that this family (a) has a stationary Gergonne point, (b) is a 3-periodic family of a second, rigidly moving elliptic billiard, and (c) the loci of incenter, barycenter, circumcenter, orthocenter, nine-point center, and a great many other triangle centers are circles.

\vskip .3cm
\noindent\textbf{Keywords} invariant, elliptic, billiard, inversion.
\vskip .3cm
\noindent \textbf{MSC} {51M04
\and 51N20 \and 51N35\and 68T20}
\end{abstract}

\section{Introduction}
%\label{sec:intro}
In \cite{reznik2020-invariants} we introduced the ``focus-inversive'' polygon, whose vertices are inversions of elliptic billiard N-periodic vertices with respect to a circle centered on one focus. Since N-periodics are inscribed in an ellipse, the focus-inversive family will be inscribed in Pascal's Limaçon \cite{ferreol2020-limacon}; see \cref{fig:inversive}, and is therefore non-Ponceletian.

A classic invariant of billiard N-periodics is perimeter \cite{sergei91}. Recently, the sum of cosines was proved as an interesting dependent\footnote{Any ``new'' invariants are dependent upon the two integrals of motion -- linear and angular momentum -- that render the elliptic billiard an integrable dynamical system \cite{kaloshin2018}.} invariant \cite{akopyan12,bialy2020-invariants}. Experimentally, for all $N$, the focus-inversive family conserves (i) the sum of distances from the focus to its vertices, (ii) perimeter, and (iii) the sum-of-cosines (except for $N=4$). The first two have been recently proved \cite{roitman2021-bicentric}.

In \cite[Thm 1]{schwartz2016-com} the loci of classic centroids of Poncelet polygons are studied. It is shown that while vertex and area centroids always sweep out conics, the locus of the perimeter centroid is in general non-conic. Interestingly, for all $N$, we find that the loci of the 3 aforementioned centroids are {\em circles} over the focus-inversive family, though a proof is still pending.

\begin{figure}
    \centering
    \includegraphics[width=.8\textwidth]{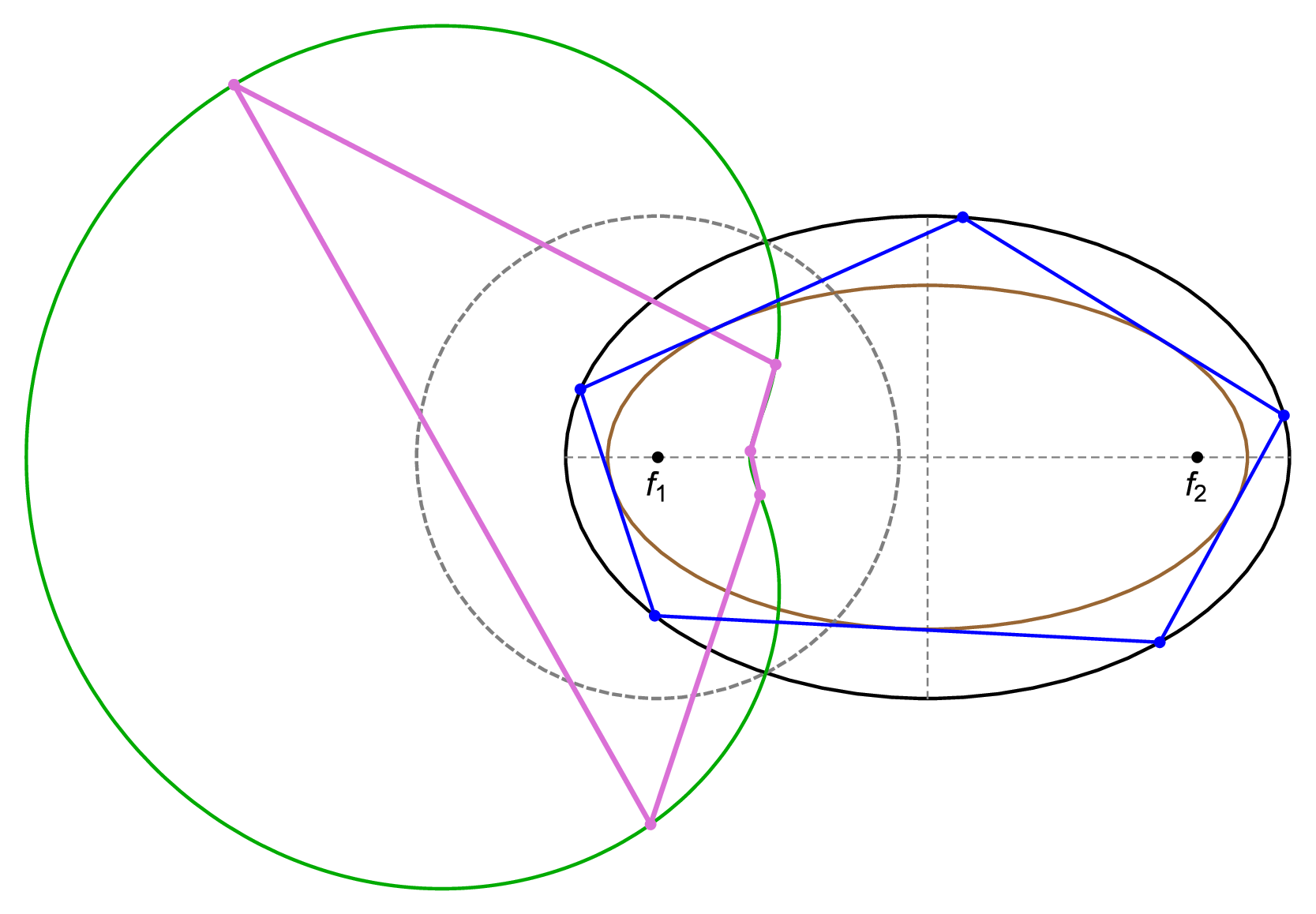}
      \caption{For a generic N-periodic (blue, N=5) with vertices $P_i$, the inversive polygon (pink) is inscribed in Pascal's Limaçon (green) and has vertices at inversions of the $P_i$ with respect to a circle (dashed black) centered on $f_1$. For all $N$, the focus-inversive perimeter, sum of cosines$^\ddagger$, and sum of focus-to-vertex distances are experimentally invariant \cite{reznik2020-invariants}; and a proof has recently appeared \cite{roitman2021-bicentric}. $^\ddagger$ Sum of focus-inversive cosines is variable for N=4 simple and a certain self-intersected N=6 \cite{garcia2020-self-intersected}. \href{https://youtu.be/wkstGKq5jOo}{Video}.}
    \label{fig:inversive}
\end{figure}

\subsection*{Summary of the Article:}

\begin{itemize}
    \item \cref{sec:prelims}: We review basic relations for the elliptic billiard.
    \item \cref{sec:n3-inv-billiard}: The focus-inversive family derived from billiard 3-periodics is non-Ponceletian, and its caustic can be compact and regular but also contain cusps, and/or be non-compact. A special triangle center \cite[Gergonne Point]{mw} is stationary. Both perimeter and sum of cosines are invariant. Indeed, focus-inversive triangles are also billiard 3-periodics, but of a rigidly-moving billiard table, whose center moves along a circle. The sum of distances from the inversion center (a focus) to each vertex is invariant as is the product of areas of left- and right-focus-inversive triangles.
    \item \cref{sec:n3-loci}: the loci of 29 of the first 100 centers listed in \cite{etc} are {\em circles}, and these include the vertex, area, and perimeter centroids. ``Why so many circles?'' is an unanswered question. We derive expressions for some centers and radii of said loci. Indeed, based on experiments with 1000 triangle centers, we conjecture that if the locus of a triangle center in the billiard family is a conic, then it is a circle in the inversive family. Proofs are welcome.
    \item \cref{sec:n3-ctr-inv}: we study a closely-related, ``center-inversive'' family, whose vertices are inversions of billiard 3-periodics with respect to a circle concentric with the billiard. The family is also non-Ponceletian (inscribed in Booth's curve and circumscribing a non-conic caustic). We prove that the locus of the circumcenter is an ellipse; based on experiments, we conjecture said center to be the only one capable of producing a conic locus.
\end{itemize}

In \cref{app:invariants} we review some classical quantities of the elliptic billiard.
%In Appendix~\ref{app:vertices-caustics} we provide explicit expressions for vertices and caustics of 3-periodics.
A reference table with all symbols used appears in \cref{app:symbols}. A link to a YouTube animation is provided in the caption of most figures. \cref{tab:playlist} in \cref{sec:videos} compiles all videos mentioned. 

\subsection*{Related Work}

Experimentation with the dynamic geometry of 3-periodics in the elliptic billiard evinced that the loci of the incenter, barycenter, and circumcenter are ellipses. These observations were soon proved \cite{garcia2019-incenter,olga14,schwartz2016-com}. Indeed, using a combination of numerical and computer-algebra methods, we showed that the loci of a full 29 out of the first 100 centers listed in Kimberling's Encyclopedia \cite{etc} are ellipses \cite{garcia2020-ellipses}. Odehnal's studied the loci of triangle centers in the poristic family \cite{odehnal2011-poristic}, detecting many as stationary (points), ellipses, and circles.

\section{Preliminaries}
\label{sec:prelims}
Throughout this article we assume the elliptic billiard is the ellipse:

\begin{equation*}
f(x,y)=\left(\frac{x}{a}\right)^2+\left(\frac{y}{b}\right)^2=1,\;\;\;a>b>0.
%\label{eqn:billiard-f}
\end{equation*}

Let $\delta=\sqrt{a^4 - a^2 b^2 - b^4}$ and $\rho$ denote the radius of the inversion circle. The eccentricity $\varepsilon=\sqrt{1-(b/a)^2}$, and $c^2=a^2-b^2$.

Recall the definition of a {\em triangle center}: these are points on the plane of a triangle defined by functions of their sidelenghts and/or angles \cite{kimberling1993_rocky}. Centers will be referred to using the $X_k$ notation, $X_1$ is the incenter, $X_3$ the circumcenter, in the manner of in Kimberling's Encyclopedia \cite{etc}. Any symbols suffixed by $\dagger$ (resp. $\odot$) refer to the focus-inversive (resp. center-inversive) triangle, e.g., $X_9^\dagger$ is the mittenpunkt of the focus-inversive triangle, $L^\dagger$ its perimeter, etc. 

\subsection*{A word about our proof method}

We omit most proofs as they have been produced by a consistent process, namely: (i) using an explicit expressions for billiard 3-periodic vertices (see \cite{garcia2019-incenter,reznik2020-ballet}), normally setting a first vertex $P_1=(a,0)$ so as to obtain an isosceles configuration; (ii) obtain a symbolic expression for the invariant of interest; (iii) simplify it (both human intervention and CAS); finally (iv) verify its validity symbolically and/or numerically over several N-periodic configurations and elliptic billiard aspect ratios $a/b$.

\section{Focus-Inversives and the Rotating Billiard Table}
\label{sec:n3-inv-billiard}
A generic $N>3$ focus-inversive polygon is illustrated in \cref{fig:inversive}. This is a polygon whose vertices are inversions of the N-periodic vertices wrt a circle of radius $\rho$ centered on focus $f_1$.

It is well known the inversion of an ellipse with respect to a focus-centered unit circle is a loopless Limaçon of Pascal $\mathcal{L}(t)$ \cite{ferreol2020-limacon}, given by:

\[ \mathcal{L}(t)=\left[-a \cos{t} (\varepsilon  \cos{t}+1)-c,a \sin{t} (\varepsilon \cos{t}+1)\right],\;\;0{\leq}t{\leq}2\pi
\]
\noindent Therefore:

\begin{remark}
For any $N$, the focus-inversive family is inscribed in $\mathcal{L}(t)$.
\end{remark}

\cref{fig:n3-caustics} shows the non-conic ``caustic'' (envelope of sidelines) to the $N=3$ focus-inversive family for various values of $a/b$. As $a/b$ is increased, the caustic transitions from (i) a regular curve, to (ii) one with a self-intersection and two cusps, to (iii) a non-compact curve with two infinite branches.

\cref{fig:n3-inv} illustrates the $N=3$ focus-inversive triangle. Recall two well-known triangle centers: (i) the Gergonne point $X_7$: this is where a triangle and its intouch (contact) triangle are perspective, see \cite[Gergonne Point]{mw}; (ii) the Mittenpunkt $X_9$: where lines drawn from the excenters through the side midpoints concur \cite[Mittenpunkt]{mw}. 

\begin{figure}
    \centering
    \includegraphics[width=\textwidth]{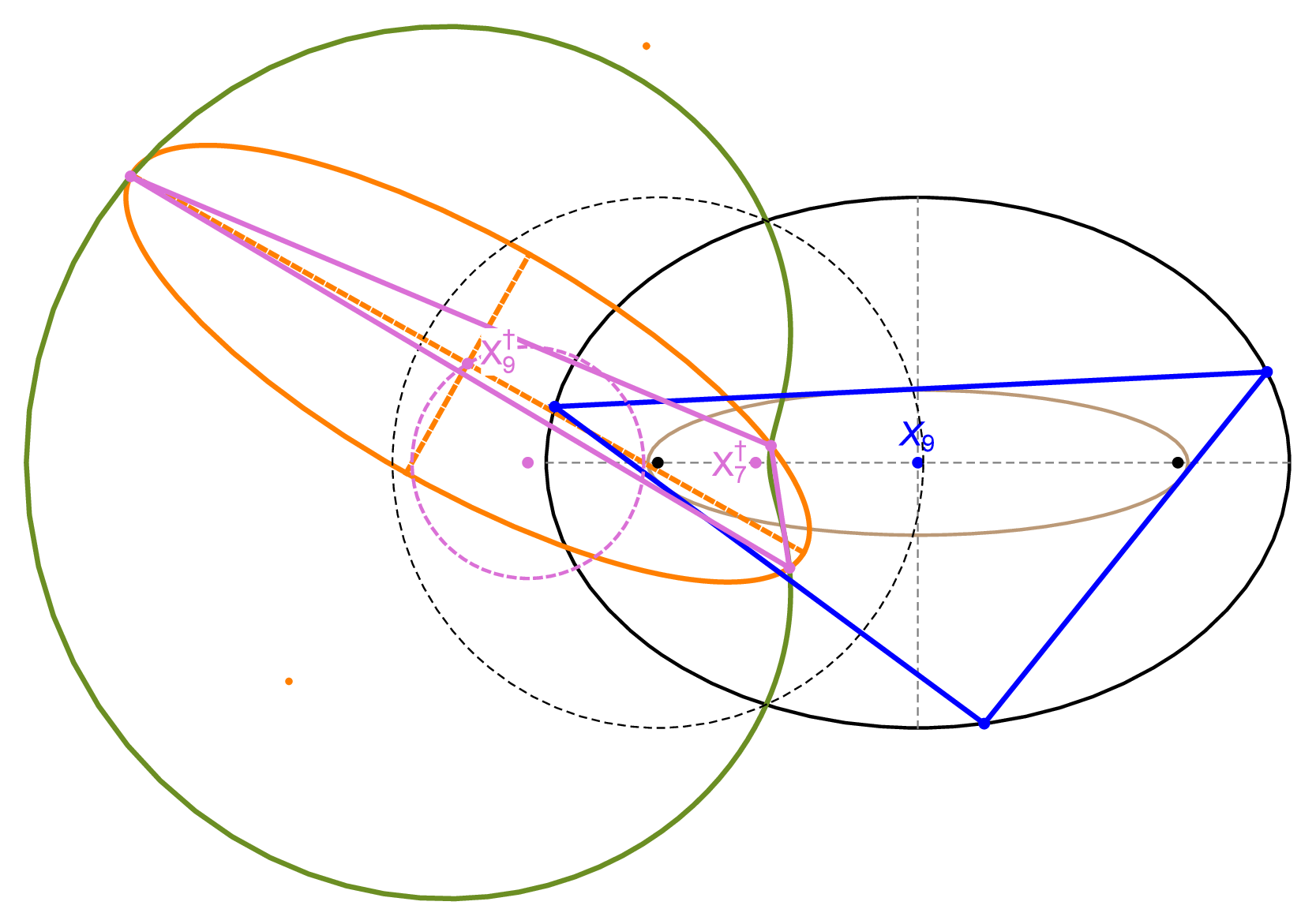}
    \caption{The vertices of 3-periodics (blue), when inverted with respect to a unit circle (dashed black) centered on a focus, produce a family of constant perimeter triangles (pink). Their $X_9$-centered circumellipses (orange) rigidly translate and rotate (invariant semi-axes) with their center $X_9^\dagger$ moving along a circle. The Gergonne point $X_7^\dagger$ of the inversive family is stationary. \href{https://youtu.be/LOJK5izTctI}{Video 1}, \href{https://youtu.be/Y-j5eXqKGQE}{Video 2}}
    \label{fig:n3-inv}
\end{figure}

\begin{figure}
    \centering
    \includegraphics[width=\textwidth]{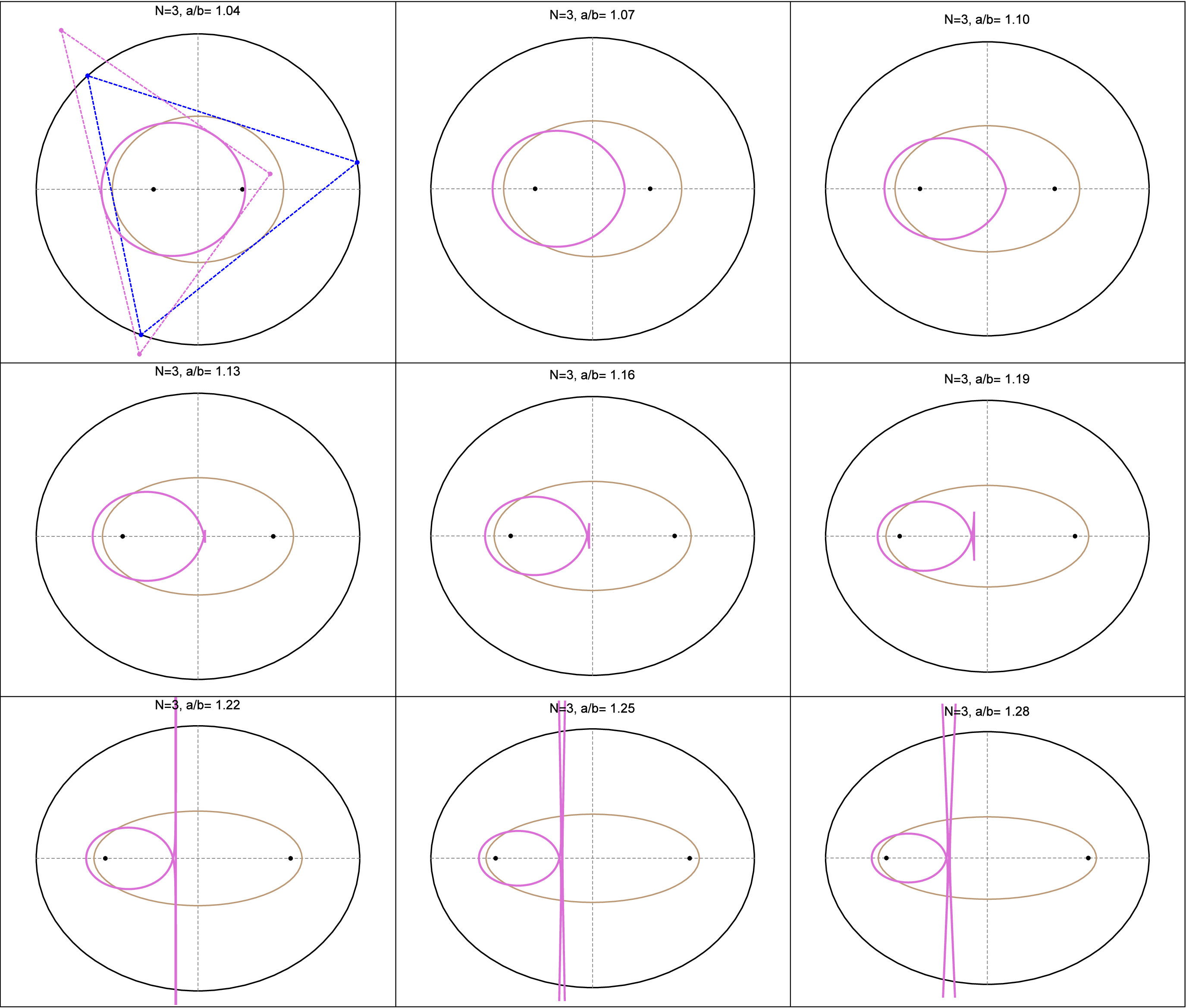}
    \caption{Non-conic caustic (pink) to the focus-inversive family (pink). As $a/b$ increases, said caustic transitions from (i) regular, to (ii) a curve with one self-intersection and two cusps, to (iii) a non-compact curve.  A sample 3-periodic (dashed blue) and the corresponding focus-inversive triangle (dashed pink) is shown on the first frame only. The billiard confocal caustic (brown) is shown on every frame. \href{https://bit.ly/374jbBl}{App}}
    \label{fig:n3-caustics}
\end{figure}

\begin{proposition}
Over the 3-periodic family, the Gergonne point $X_7^\dagger$ of focus-inversive triangles is stationary on the billiard's major axis at location:
\[ X_7^\dagger=\left[c\left(1-\frac{\rho^2}{\delta+c^2}\right),0\right]\]
\end{proposition}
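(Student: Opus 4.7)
The plan is to follow the author's CAS template (Section~\ref{sec:prelims}) applied to a standard barycentric formula for the Gergonne point. Denote the inversive side lengths opposite $P_i^\dagger$ by $\ell_i^\dagger$ and the semi-perimeter by $s^\dagger$. Then, in affine coordinates,
\[ X_7^\dagger = \frac{\sum_i \ell_i^\dagger (s^\dagger - \ell_{i+1}^\dagger)(s^\dagger - \ell_{i+2}^\dagger)\, P_i^\dagger}{\sum_i \ell_i^\dagger (s^\dagger - \ell_{i+1}^\dagger)(s^\dagger - \ell_{i+2}^\dagger)}. \]
I would parameterize a generic 3-periodic by the eccentric anomaly of $P_1$ using Appendix~\ref{app:vertices-caustics}, express the inversive vertices as $P_i^\dagger = f_1 + (\rho^2/r_i^2)(P_i - f_1)$ with focal distance $r_i = a \pm \varepsilon x_i$ linear in the vertex $x$-coordinate, and obtain the inversive sides via the classical identity $\ell_{ij}^\dagger = \rho^2\, |P_i - P_j|/(r_i r_j)$. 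Every ingredient of the Gergonne formula is then a rational function of the vertex coordinates and the $r_i$.

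Two reductions streamline the computation. First, both the 3-periodic family and the inversion circle are symmetric under $y \to -y$, so any point stationary over the family must lie on the major axis; it suffices to verify that the $y$-coordinate of $X_7^\dagger$ vanishes identically and that its $x$-coordinate is independent of the parameter. Second, I would first evaluate the expression at the symmetric configuration $P_1 = (a,0)$, where the 3-periodic is isosceles and the calculation reduces to a one-variable identity producing the claimed constant $c(1 - \rho^2/(\delta + c^2))$; this pins the target value.

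The substantive step is then to show that the $x$-coordinate of $X_7^\dagger$ at a generic vertex position equals this same constant. I would cast this as a polynomial identity in the eccentric anomaly of $P_1$, reduced modulo the Poncelet closure relation that characterizes 3-periodic vertex triples (equivalently, the Joachimsthal billiard invariant reviewed in Appendix~\ref{app:invariants}). Concretely: form the difference between the CAS-computed $x$-coordinate and the target constant, clear denominators, and check that the numerator lies in the ideal generated by the closure polynomial.

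The principal obstacle is algebraic bulk. The Gergonne formula is cubic in the inversive side lengths, each $\ell_{ij}^\dagger$ carries a square root from the underlying Euclidean distance, and the 3-periodic vertex parameterization itself involves radicals through the caustic. To keep the CAS tractable I would (i) work with squared distances and squared $r_i$'s wherever possible, postponing radical extraction, (ii) carry the focal distances $r_i$ as symbolic primitives and substitute $r_i = a \pm \varepsilon x_i$ only at the final reduction step, and (iii) use $c(1 - \rho^2/(\delta + c^2))$ as a verification target rather than attempting to derive it from scratch; the form of the answer (in particular the appearance of $\delta$) is what tells the simplifier what combinations of $a,b,c$ to look for.
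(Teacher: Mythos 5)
Your overall plan coincides with the paper's own (unstated) proof: the authors say in Section~\ref{sec:prelims} that propositions of this kind are established by computing the quantity symbolically from the Appendix~\ref{app:vertices-caustics} vertex parameterization, simplifying with a CAS, and verifying symbolically and/or numerically over configurations and aspect ratios; no further detail is given for this proposition. Your refinements --- working with squared distances, using $|P_i^\dagger - P_j^\dagger| = \rho^2 |P_i - P_j|/(r_i r_j)$, exploiting the $y \mapsto -y$ symmetry to force the point onto the major axis, and pinning the constant at the isosceles configuration $P_1=(a,0)$ before checking the generic case --- are all sound and consistent with that template.

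There is, however, one concrete error that would make the computation fail as written: your barycentric formula for the Gergonne point is not the Gergonne point. $X_7$ has barycentrics $1/(s-\ell_1) : 1/(s-\ell_2) : 1/(s-\ell_3)$, i.e.\ Cartesian weight $(s-\ell_{i+1})(s-\ell_{i+2})$ at vertex $P_i$. Your weights carry an extra factor $\ell_i^\dagger$, giving barycentrics $\ell_1/(s-\ell_1) : \ell_2/(s-\ell_2) : \ell_3/(s-\ell_3)$, equivalently trilinears $1/(s-\ell_1) : 1/(s-\ell_2) : 1/(s-\ell_3)$ --- that is $X_{57}$, the isogonal conjugate of the Mittenpunkt, not $X_7$. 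By the theorem of Section~\ref{sec:n3-loci}, $X_{57}^\dagger$ sweeps a \emph{circle} over the focus-inversive family, so the quantity you would actually compute is not stationary, the target value $c\left(1-\rho^2/(\delta+c^2)\right)$ would not be matched, and the polynomial you propose to reduce modulo the closure relation would not lie in that ideal. Dropping the factor $\ell_i^\dagger$ fixes this; with the corrected weights the rest of your pipeline goes through as described.
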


Note: let $X_7^\ddagger$ denote the inversion of $X_7^\dagger$ with respect to the same inversion circle used above. Then:

\[ X_7^\ddagger=\left[\frac{\delta}{c},0\right]\]

\begin{proposition}
Over the 3-periodic family, the Mittenpunkt $X_9^\dagger$ of focus-inversive triangles moves along a circle with center and radius given by:
\begin{align*}
    C_9^\dagger=&\left[-c\left(1+\rho^2 \frac{1}{2b^2}\right), 0\right] \\
    R_9^\dagger=&\rho^2\, \frac{2a^2-b^2-\delta}{2 a b^2}
\end{align*}
\end{proposition}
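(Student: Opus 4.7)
The plan is to follow the paper's stated symbolic strategy on this specific claim. First I would take the explicit parametric expressions for the 3-periodic vertices $P_1(t),P_2(t),P_3(t)$ from Appendix~\ref{app:vertices-caustics} and invert them through the focal inversion
\[
P_i^\dagger \;=\; f_1 + \rho^2\,\frac{P_i - f_1}{|P_i - f_1|^2},\qquad f_1 = (-c,0),
\]
then extract the side lengths $\ell_i^\dagger = |P_{i+1}^\dagger - P_{i-1}^\dagger|$ (indices mod $3$) of the focus-inversive triangle. The algebra is kept tractable by the classical focal-radius identity on the billiard, $|P_i - f_1| = a + \varepsilon (P_i)_x$, which turns the inversion denominators into linear functions of the vertex coordinates; and by the invariance of the focus-inversive perimeter established earlier in Section~\ref{sec:n3-inv-billiard}, which makes the semi-perimeter $s^\dagger$ a constant of the family.

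Next I would compute $X_9^\dagger$ from its standard Mittenpunkt barycentric formula,
\[
X_9^\dagger \;=\; \frac{\sum_{i=1}^{3} \ell_i^\dagger (s^\dagger - \ell_i^\dagger)\,P_i^\dagger}{\sum_{i=1}^{3} \ell_i^\dagger (s^\dagger - \ell_i^\dagger)}.
\]
The $y\mapsto -y$ symmetry of the configuration (obtained by swapping $P_2\leftrightarrow P_3$) forces the center of any circular locus to lie on the major axis, matching the stated $(C_9^\dagger)_y=0$. The remaining step is to verify the polynomial identity
\[
\bigl((X_9^\dagger)_x - (C_9^\dagger)_x\bigr)^2 + \bigl((X_9^\dagger)_y\bigr)^2 - (R_9^\dagger)^2 \;\equiv\; 0
\]
modulo the defining relations $\delta^2 = a^4 - a^2 b^2 - b^4$ and $c^2 = a^2 - b^2$, with a CAS doing the heavy simplification, followed by numerical spot checks across several aspect ratios $a/b$ and radii $\rho$, as prescribed in the paper's proof-method note. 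The candidate $C_9^\dagger$ and $R_9^\dagger$ can be fixed in advance by evaluating $X_9^\dagger$ at two symmetric configurations (e.g.\ $P_1=(a,0)$ and $P_1=(-a,0)$) and taking the midpoint and half-distance.

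The main obstacle is algebraic bulk: the inversion introduces rational denominators that, when combined with the Mittenpunkt's side-length weights, yield unwieldy rational expressions in $t$. The expected simplifications come from (i) the already-established focus-inversive invariants (constant perimeter and sum of cosines) which collapse several symmetric functions of $\{\ell_i^\dagger\}$ to constants, and (ii) 3-periodic-specific identities on the billiard relating $\cos\theta_i$ to $|P_i-f_1|$. A potentially cleaner synthetic route — deducing the circular motion of $X_9^\dagger$ from the rigid motion of the inversive family's governing ellipse, whose center is precisely $X_9^\dagger$ — is attractive, but pinning down the circle's center and radius in closed form still seems to require the same symbolic input.
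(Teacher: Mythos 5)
Your plan coincides with the paper's own (omitted) proof method: parametrize the 3-periodic via the Appendix~\ref{app:vertices-caustics} vertex formulas, invert about $f_1$, form $X_9^\dagger$ from its barycentric (Mittenpunkt) weights, and let a CAS verify the circle identity symbolically, backed by numerical checks over several aspect ratios. One caution: when reducing modulo the relation for $\delta$, use the Appendix~\ref{app:vertices-caustics} definition $\delta^2=a^4+b^4-a^2b^2$ rather than the expression $a^4-a^2b^2-b^4$ given in Section~\ref{sec:prelims} (the latter is negative near $a=b$ and is evidently a typo), or the polynomial identity will not close.
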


\begin{remark}
Let $C_9^\ddagger$ denote the inversion of $C_9^\dagger$ with respect to the same inversion circle used above. Then:

\[ C_9^\ddagger=\left[-\frac{a^2+b^2}{c}, 0\right]\]
\end{remark}

\begin{proposition}
The perimeter $L^\dagger$ of the $N=3$ focus-inversive family is invariant and given by:

\[L^\dagger=\rho^2 \frac {\sqrt { \left( 8\,{a}^{4}+4\,{a}^{2}{b}^{2}+2\,{b}^{4}
 \right) \delta+8\,{a}^{6}+3\,{a}^{2}{b}^{4}+2\,{b}^{6}}}{{a}^{2}{b}^{
2}}\]
%\label{prop:n3-per}
\end{proposition}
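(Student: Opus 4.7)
The plan is to reduce $L^\dagger$ to a rational expression in the side lengths $s_i$ and focal radii $r_i$ of the underlying 3-periodic, then establish invariance of that expression using the explicit parametrization of Appendix~\ref{app:vertices-caustics}. Writing $s_i=|P_jP_k|$ and $r_i=|P_i-f_1|$, the standard inversion identity $|P_i^\dagger-P_j^\dagger|=\rho^2|P_i-P_j|/(r_i r_j)$ gives
\[
L^\dagger \;=\; \rho^2\!\left(\frac{s_1}{r_2 r_3}+\frac{s_2}{r_3 r_1}+\frac{s_3}{r_1 r_2}\right) \;=\; \frac{\rho^2\,(r_1 s_1+r_2 s_2+r_3 s_3)}{r_1 r_2 r_3}.
\]
Since the focal-radius formula $r_i=a+\varepsilon\,x_i$ makes both numerator and denominator polynomials in the vertex coordinates, it suffices to show this ratio is constant across the 3-periodic family and to identify its value.

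To pin down the value, I would first evaluate at the isosceles configuration $P_1=(a,0)$, $P_2=(x_0,y_0)$, $P_3=(x_0,-y_0)$, where the closure condition is one-dimensional and $x_0,y_0$ admit a closed form via Appendix~\ref{app:vertices-caustics}. Direct substitution into the displayed ratio followed by simplification against the defining identity $\delta^2=a^4-a^2 b^2-b^4$ should yield the closed form in the statement. To establish invariance itself, I would then substitute the general Appendix~\ref{app:vertices-caustics} parametrization (in terms of the caustic-tangency parameter $t$ of one side) into the same ratio, compute the $t$-dependent numerator and denominator explicitly, and verify with a computer algebra system that the quotient is $t$-independent --- equivalently, that the $t$-dependent numerator equals the candidate constant times the denominator modulo the ellipse equation and the reflection-law constraints.

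The main obstacle will be the algebraic simplification: the Appendix~\ref{app:vertices-caustics} vertex formulas carry nested radicals through $\delta$ and the semi-axes of the confocal caustic, so the cancellations required to collapse $(r_1 s_1+r_2 s_2+r_3 s_3)/(r_1 r_2 r_3)$ onto the stated constant only emerge after repeated use of the 3-periodicity relations and of the square-root identities for $\delta$. This is the step where CAS is indispensable, following the methodology outlined in the paper's ``word about our proof method.''
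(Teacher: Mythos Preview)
Your proposal is correct and follows essentially the same CAS-based route the paper announces in its ``word about our proof method'': obtain the closed form from the isosceles configuration $P_1=(a,0)$ via Appendix~\ref{app:vertices-caustics}, then verify invariance symbolically over the general parametrization. Your preliminary reduction of $L^\dagger$ to the rational form $\rho^{2}(r_1 s_1+r_2 s_2+r_3 s_3)/(r_1 r_2 r_3)$ via the inversion distance identity is a clean extra step the paper does not spell out, but otherwise the methods coincide; note only that Appendix~\ref{app:vertices-caustics} parametrizes by the first vertex $(x_1,y_1)$ on the billiard rather than by a caustic-tangency parameter~$t$.
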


\noindent Surprisingly, perimeter invariance of the focus-inversive polygon has been generalized to all $N$ \cite{roitman2021-bicentric}.

\begin{proposition}
For $N=3$, the sum of cosines of focus inversive triangles is given by:
\[\sum\cos{\theta_{1,i}^\dagger}=\frac{\delta (a^2+c^2-\delta)}{a^2c^2}\]
\end{proposition}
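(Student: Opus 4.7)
\medskip

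The plan is to reduce the statement to a single-configuration calculation by leveraging Theorem~1 and then execute that calculation symbolically, in the spirit of the paper's stated proof methodology.

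By Theorem~1 the focus-inversive family is a 3-periodic billiard family of a fixed (up to rigid motion) ellipse $\mathcal{C}^\dagger$, whose semi-axes $a^\dagger, b^\dagger$ are given by Proposition~3 and are manifestly independent of the particular 3-periodic. Since the sum of cosines of a billiard 3-periodic in a fixed ellipse is a known Poncelet-family invariant (Akopyan, Bialy, cited in the Introduction), $\sum \cos\theta_{1,i}^\dagger$ is constant along the focus-inversive family. It therefore suffices to evaluate it at one convenient member.

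I would take the isosceles 3-periodic with $P_1=(a,0)$ and $P_2, P_3$ symmetric about the major axis, whose explicit coordinates come from Appendix~\ref{app:vertices-caustics}. Applying the inversion $P\mapsto f_1+\rho^2(P-f_1)/|P-f_1|^2$ to each vertex yields an isosceles focus-inversive triangle, so only two of the three cosines are algebraically distinct. Compute each cosine from dot products of the incident side vectors,
\[
\cos\theta_{1,i}^\dagger = \frac{(P_{i-1}^\dagger - P_i^\dagger)\cdot(P_{i+1}^\dagger - P_i^\dagger)}{|P_{i-1}^\dagger - P_i^\dagger|\,|P_{i+1}^\dagger - P_i^\dagger|},
\]
sum them, and simplify with a CAS using $c^2=a^2-b^2$ and the paper's definition of $\delta$. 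Two consistency checks are worth building in: the final expression must be $\rho$-independent (cosines are invariant under the homothety $\rho\to\lambda\rho$ that scales the inversive triangle), and its value at a non-isosceles numerical sample must agree.

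The main obstacle is algebraic bulk: inversion turns each coordinate into a rational function whose denominator is $|P_i-f_1|^2$, so each individual $\cos\theta_{1,i}^\dagger$ is a rational function in $(a,b,\rho)$ of substantially higher degree than the final answer, and the delicate step is the cancellation that eliminates $\rho$ entirely and collapses the sum into $\delta(a^2+c^2-\delta)/(a^2 c^2)$. A cleaner route, which makes this cancellation essentially automatic, is to substitute the semi-axes $(a^\dagger, b^\dagger)$ of Proposition~3 directly into the known closed-form sum-of-cosines formula for a 3-periodic in an ellipse with given semi-axes: the common $\rho^2 k_1$ prefactor drops out of any ratio of homogeneous-degree-$2$ quantities, leaving only the ratio $(a^\dagger/b^\dagger)^2 = (\delta+ac)/(\delta-ac)$ to rationalize against that formula.
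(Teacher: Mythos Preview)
Your proposal is correct, but it follows a more structured route than the paper's own treatment. The paper does not give a dedicated proof for this proposition; it falls under the blanket methodology announced in Section~\ref{sec:prelims}: parametrize the generic 3-periodic via Appendix~\ref{app:vertices-caustics}, form the sum of cosines symbolically, simplify with CAS, and verify (symbolically and/or numerically) across configurations and aspect ratios. In particular, the paper does not invoke Theorem~1 or Proposition~3 here.

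Your argument differs in two respects. First, by appealing to Theorem~1 and the Akopyan--Bialy invariance of the cosine sum for billiard 3-periodics, you establish constancy \emph{a priori}, so a single isosceles evaluation suffices; this is more rigorous than the paper's verification step and also explains \emph{why} the quantity is invariant rather than merely confirming it. Second, your alternative route---plugging the semi-axes $a^\dagger,b^\dagger$ of Proposition~3 into the closed-form cosine-sum formula for an elliptic billiard, exploiting that the common factor $\rho\,k_1\sqrt{k_2}$ cancels in any degree-zero ratio---bypasses vertex-level inversion algebra entirely. That is genuinely cleaner than the paper's direct CAS grind, at the cost of relying on earlier propositions and on an external closed form that the paper references but does not state explicitly (so you would need to supply or derive it). The paper's approach, by contrast, is self-contained but opaque.
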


As seen above, the focus-inversive family simultaneously conserves perimeter and sum of cosines. Still referring to \cref{fig:n3-inv}:

\begin{corollary}
With respect to a reference system centered on $X_9^\dagger$ and oriented along the semi-axes of $\mathcal{C}^\dagger$, the inversive $N=3$ family is a 3-periodic billiard family of $\mathcal{C}^\dagger$.
\end{corollary}

Indeed, $\mathcal{C}^\dagger$ can be regarded as the inversive triangle's rigidly-moving mittenpunkt-centered circumconic, called the ``circumbilliard'' in \cite{reznik2021-circum}:

\begin{proposition}
The $X_9^\dagger$-centered circumellipse $\mathcal{C}^\dagger$ of the $N=3$ inversive family has invariant semi-axes $a^\dagger$ and $b^\dagger$ given by:

\begin{align*}
    a^\dagger&= \rho\,k_1
\sqrt {k_2\left(\delta+a\,c\right)}\\
    b^\dagger&= \rho\,k_1
\sqrt {k_2\left(\delta-a\,c\right) }\\
 \mbox{where:}&\\
k_1&=\frac{c \sqrt{2}}{k_3}\sqrt { \left( 8\,{a}^{4}+4\,{a}^{2}{b}^{2}+2\,{b}^{4}
 \right) \delta+8\,{a}^{6}+3\,{a}^{2}{b}^{4}+2\,{b}^{6}}\\
 k_2&=2 a^2-b^2-\delta\\
 %\sqrt{ \left( -4\,{a}^{2}+2\,{b}^{2} \right) \delta+5\,{a}^{4}-5\,{a}^{2}{b}^{2}+2\,{b}^{4}}\\
k_3&=2a  {b}^{2} \left(  \left( 2\,{a}^{2}-{b}^{2} \right) \delta+2
\,{a}^{4}-2\,{a}^{2}{b}^{2}-{b}^{4}\right)
\end{align*}

\end{proposition}

\subsection*{More Focus-Inversive Invariants}

As noted in \cref{tab:symbols}, let $d_{1,i}=|P_i-f_1|$.

\begin{proposition}
For $N=3$, the sum of focus-inverse spoke lengths is invariant and given by:

\[  \sum\frac{1}{d_{1,i}}=\frac {{a}^{2}+{b}^{2}+\delta}{a{b}^{
2}}
\]
%= {\frac {J\sqrt{2}\sqrt{ J\,L+\sqrt {9-2\,JL}-3 }}{ J L-4 }}
% =\left(\frac{2J}{JL-4}\right)^2

\end{proposition}

\noindent Referring to \cref{fig:inv-pedals}:

\begin{proposition}
For $N=3$, the area product $A_1^\dagger A_2^\dagger$ of the two focus-inversive triangles is given by:

\[ A_1^\dagger A_2^\dagger= \frac{\rho^8}{8 a^8 b^2}  \left[\left( {a}^{4}+2\,{a}^{2}{b}^{2}+4\,{b}^{4} \right)\delta +  \frac{3 a^{4} b^{2}}{2}+a^6+4\,{b}^{6} \right]
\]
\end{proposition}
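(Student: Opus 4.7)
The plan is to follow the symbolic-computation scheme of Section~\ref{sec:prelims}. I parametrize the 3-periodic by the eccentric angle $t$ of its first vertex $P_1=(a\cos t,b\sin t)$ and pull the closed forms for $P_2,P_3$ out of Appendix~\ref{app:vertices-caustics}. For each focus $f_j$ (with $f_1=(c,0)$ and $f_2=(-c,0)$) the inverted vertices are
\[
P_i^{(j)} \;=\; f_j + \frac{\rho^{2}(P_i-f_j)}{|P_i-f_j|^{2}},
\qquad i=1,2,3,\ j=1,2.
\]
The focal-distance identity $|P_i-f_j|=a\mp\varepsilon\,x_i$ for points on the billiard keeps every denominator polynomial in the vertex abscissae.

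Fixing $j$ and setting $u_i=P_i-f_j$, $r_i=|u_i|$, a short cross-product calculation then yields
\[
A_j^\dagger \;=\; \frac{\rho^{4}}{2\,(r_1 r_2 r_3)^{2}}\,
\bigl|\,r_1^{2}\,u_2\!\times\! u_3 + r_2^{2}\,u_3\!\times\! u_1 + r_3^{2}\,u_1\!\times\! u_2\,\bigr|,
\]
so upon multiplying $A_1^\dagger$ and $A_2^\dagger$ the two denominators combine into the symmetric, polynomial quantity $\prod_i(|P_i-f_1|\,|P_i-f_2|)^{2}=\prod_i (a^{2}-\varepsilon^{2}x_i^{2})^{2}$. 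The result is a rational function of $t$ whose numerator and denominator are both polynomials in $(\cos t,\sin t)$ once the expressions for $P_2,P_3$ from Appendix~\ref{app:vertices-caustics} are substituted.

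The main obstacle is the final step: verifying that this rational function is genuinely constant in $t$. The cancellation is driven by the algebraic identities that force $P_1P_2P_3$ to close up into a 3-periodic (the Cayley/caustic relations of Appendix~\ref{app:vertices-caustics}); I would feed these into a CAS and reduce the numerator modulo them. Once the $t$-dependence is eliminated, the remaining constant can be read off and matched against the claimed closed form. Following the paper's methodology, I would also cross-check the answer symbolically at the isosceles configuration $t=0$, where $P_1=(a,0)$ leads to especially simple expressions for $P_2,P_3$, and then numerically for several aspect ratios $a/b$.
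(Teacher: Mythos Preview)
Your proposal is correct and follows essentially the same CAS-based scheme the paper describes in Section~\ref{sec:prelims}: pull the vertex formulas from Appendix~\ref{app:vertices-caustics}, form the invariant symbolically, simplify, and verify across configurations. Your explicit area identity $A_j^\dagger=\frac{\rho^{4}}{2(r_1r_2r_3)^{2}}\bigl|r_1^{2}\,u_2{\times}u_3+r_2^{2}\,u_3{\times}u_1+r_3^{2}\,u_1{\times}u_2\bigr|$ and the use of the focal-chord relation $r_{1,i}r_{2,i}=a^{2}-\varepsilon^{2}x_i^{2}$ are helpful refinements the paper does not spell out, but the overall route is the same.
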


Experiments suggest:

\begin{conjecture}
Over billiard $N$-periodics, $N{\geq}3$ is odd, the area product $A_1^\dagger A_2^\dagger$ is invariant.
\end{conjecture}

\begin{observation}
The areas of both pedal triangles of the focus-inversives with respect to the respective foci are dynamically equal.
\end{observation}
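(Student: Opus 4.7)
The plan is to reduce the equality of pedal areas to an intrinsic identity about the 3-periodic $T$, and dispatch that identity using the fact that $f_1,f_2$ are isogonal conjugates with respect to $T$. The engine throughout is the classical pedal-area formula
\[ [\mathrm{pedal}(P, T')]\;=\;\frac{|\mathrm{pow}(P,\mathcal{K}(T'))|\,[T']}{4\,R(T')^2}, \]
valid for any point $P$ and triangle $T'$ with circumcircle $\mathcal{K}(T')$ and circumradius $R(T')$.

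The first step is to apply this with $P=f_i$ and $T'=T_i^\dagger$, and then transport each ingredient back to $T$. Since the circumcircle of $T_i^\dagger$ is the image of $\mathcal{K}:=\mathcal{K}(T)$ under inversion centered at $f_i$ with radius $\rho$, the inversion-of-circle identities yield
\[ |\mathrm{pow}(f_i,\mathcal{K}(T_i^\dagger))|=\frac{\rho^4}{|\mathrm{pow}(f_i,\mathcal{K})|}, \qquad R(T_i^\dagger)=\frac{\rho^2\,R(T)}{|\mathrm{pow}(f_i,\mathcal{K})|}. \]
Combining with the inversion side-length law $|Q_j^\dagger Q_k^\dagger|=\rho^2|Q_jQ_k|/(r_{i,j}r_{i,k})$, where $r_{i,j}=|f_iQ_j|$, and with $[T]=|Q_1Q_2||Q_2Q_3||Q_3Q_1|/(4R(T))$, a short computation collapses everything to
\[ [\mathrm{pedal}(f_i,T_i^\dagger)]\;=\;\frac{\rho^4\,\mathrm{pow}(f_i,\mathcal{K})^2\,[T]}{4\,R(T)^2\,(r_{i,1}r_{i,2}r_{i,3})^2}. \]
Since $[T]$ and $R(T)$ are shared by both values of $i$, the claim reduces to the intrinsic identity, call it $(\star)$,
\[ \frac{|\mathrm{pow}(f_1,\mathcal{K})|}{r_{1,1}r_{1,2}r_{1,3}}\;=\;\frac{|\mathrm{pow}(f_2,\mathcal{K})|}{r_{2,1}r_{2,2}r_{2,3}}. \]

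For $(\star)$ I would invoke the classical fact that $f_1,f_2$ are isogonal conjugates with respect to $T$ --- an immediate consequence of the billiard reflection law at each vertex, where the two focal cevians are reflections across the normal, i.e., across the internal angle bisector. For any isogonal pair $P,P^*$ inside a triangle $ABC$, the six feet of perpendiculars to the sides lie on a common pedal circle of some radius $R_{\mathrm{pedal}}$; equating the pedal-area formula above with the chord expression
\[ [\mathrm{pedal}(P,ABC)]\;=\;\frac{|PA||PB||PC|\sin A\sin B\sin C}{4R_{\mathrm{pedal}}} \]
(obtained from the pedal-side lengths $|P_jP_k|=|PA_i|\sin A_i$ within this circle) shows that $|\mathrm{pow}(P,\mathcal{K})|/(|PA||PB||PC|)$ is invariant under $P\leftrightarrow P^*$. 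Specializing to $(P,P^*)=(f_1,f_2)$ yields $(\star)$.

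The main obstacle will be sign bookkeeping: one must verify that both foci lie inside the circumcircle of $T$ throughout the Poncelet family (so the two powers carry the same sign and $(\star)$ holds without case-splitting) and that $R_{\mathrm{pedal}}$ is genuinely common to both foci (the content of the common pedal circle theorem for isogonal conjugates). If the synthetic argument turns out to be awkward at degenerate configurations --- for instance when $T$ is isosceles and two of the $r_{i,j}$ coincide --- the fallback is the computational methodology of Section~\ref{sec:prelims}: substitute the explicit vertex parametrization of Appendix~\ref{app:vertices-caustics} and verify $(\star)$ as a symbolic identity.
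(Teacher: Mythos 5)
Your argument is correct, and it is a genuinely different route from the paper's: the paper offers no proof of this statement at all --- it is stated as an \emph{Observation}, backed only by the symbolic/numeric CAS verification described in Section~\ref{sec:prelims}. Your chain of reductions checks out: the pedal-area formula $[\mathrm{pedal}(P,T')]=|\mathrm{pow}(P,\mathcal{K}(T'))|\,[T']/(4R(T')^2)$, the inversion laws for the power, circumradius, and side lengths, and the collapse to $[\mathrm{pedal}(f_i,T_i^\dagger)]=\rho^4\,\mathrm{pow}(f_i,\mathcal{K})^2\,[T]/\bigl(4R(T)^2(r_{i,1}r_{i,2}r_{i,3})^2\bigr)$ are all verified by direct computation, and the identity $(\star)$ does follow from the common pedal circle of isogonal conjugates exactly as you describe (both pedal triangles of $f_1,f_2$ with respect to $T$ are inscribed in the same circle, so equating your two area expressions makes $|\mathrm{pow}(P,\mathcal{K})|/(|PA||PB||PC|)$ isogonally invariant). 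The sign issue you flag is benign: each focus lies inside the inscribed caustic, hence inside $T$, hence inside $\mathcal{K}$, so both powers are negative throughout the family. What your approach buys beyond the paper's is substantial: an explicit closed form for the common pedal area, an explanation of \emph{why} the equality holds (the billiard enters only through the isogonal conjugacy of the foci, itself forced by the reflection law plus the optical property), and a visible generalization --- the statement holds for any triangle and any pair of isogonal conjugates used as inversion centers, not just for foci of a billiard 3-periodic. The only cosmetic slip is the sign of the cross-ratio of exponents in your intermediate display (you write $|Q_j^\dagger Q_k^\dagger|$ with $r_{i,j}r_{i,k}$ in the denominator, which is right); nothing in the logic depends on the degenerate isosceles configurations you worry about, since $(\star)$ is an identity of continuous functions that holds on a dense open set of the family.
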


\begin{figure}
    \centering
    \includegraphics[width=\textwidth]{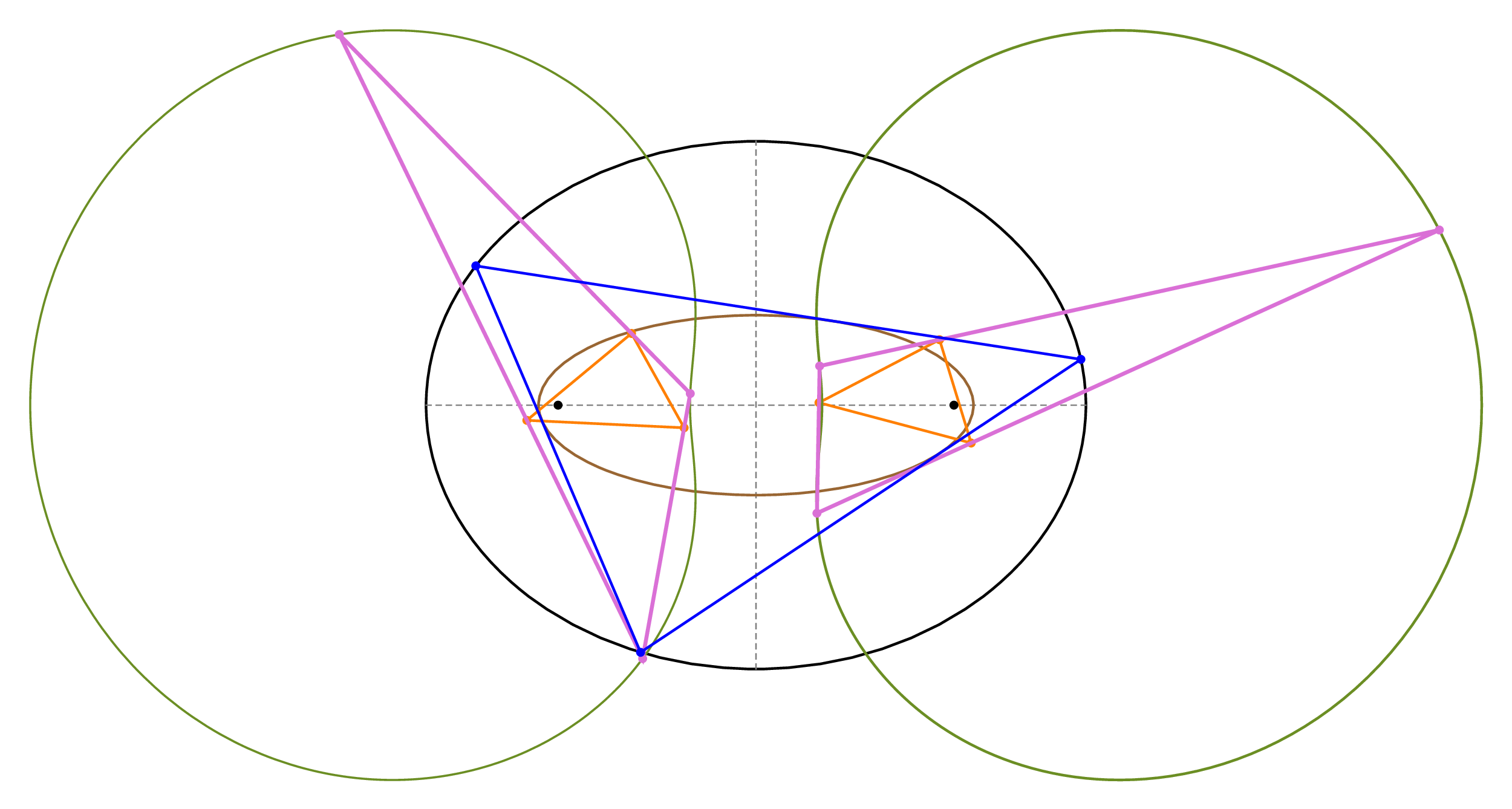}
    \caption{$f_1$- and $f_2$-inversive triangles are shown (pink). The product of their areas is invariant. Also shown are the $f_1$- and $f_2$-pedal polygons (orange) to said triangles. Their areas are variable but dynamically equal. \href{https://youtu.be/0L2uMk2xyKk}{Video}}
    \label{fig:inv-pedals}
\end{figure}

\section{Circular Loci of Triangle Centers}
\label{sec:n3-loci}
In \cite{garcia2020-ellipses} we showed that amongst the first 100 triangle centers listed on \cite{etc}, the following 29 sweep an elliptic locus over 3-periodics in the elliptic billiard: $X_k$, $k$=1, 2, 3, 4, 5, 7, 8, 10, 11, 12, 20, 21, 35, 36, 40, 46, 55, 56, 57, 63, 65, 73, 78, 79, 80, 84, 88, 90, 100. Notably, the Mittenpunkt $X_9$ is stationary at the billiard center \cite{reznik2020-intelligencer}.

Referring to \cref{fig:n3-loci-12345}:

\begin{theorem}
Amongst the first 100 triangle centers listed on \cite{etc}, the following 28 sweep a circular locus over focus-inversive 3-periodics in the elliptic billiard: $X_k^\dagger$, $k$=1, 2, 3, 4, 5, 8, 9, 10, 11, 12, 20, 21, 35, 36, 40, 46, 55, 56, 57, 63, 65, 73, 78, 79, 80, 84, 90, 100. All centers lie on the billiard major axis. 
\end{theorem}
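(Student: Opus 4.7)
My plan has two parts, exploiting the structural results already established and then closing via the computational protocol outlined in Section~\ref{sec:prelims}.

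First, I would dispose of $X_9^\dagger$ immediately. The inversive family is a 3-periodic billiard family of $\mathcal{C}^\dagger$ (by the Theorem of Section~\ref{sec:n3-inv-billiard}), and it is a classical fact that the Mittenpunkt of any 3-periodic billiard family sits at the center of its caustic-bearing ellipse. Hence $X_9^\dagger = \bar{X}_9$, which is already known to trace the circle with center $C_9^\dagger$ and radius $R_9^\dagger$ on the major axis. This disposes of the single center in the list that does not appear among the elliptic-locus centers of the standard 3-periodic family.

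For the remaining 27 centers, my strategy is to pull the problem back into the co-moving frame of $\mathcal{C}^\dagger$. In that frame the inversive family is a genuine 3-periodic billiard family in an ellipse of fixed semi-axes $a^\dagger, b^\dagger$, so by \cite{garcia2020-ellipses} each of these 27 centers traces an \emph{ellipse} around the fixed origin (which is $X_9$). The lab-frame locus is then obtained by applying the time-dependent isometry $p \mapsto R(\phi(t))\, p + \bar{X}_9(t)$, where $\phi(t)$ is the orientation of $\mathcal{C}^\dagger$ relative to the billiard axes and $\bar{X}_9(t)$ traces the circle of Proposition on $C_9^\dagger, R_9^\dagger$. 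To finish I would show that the coupling between $\phi(t)$ and the point on the intrinsic ellipse corresponding to the same 3-periodic is exactly such that the rotated-and-translated ellipse collapses to a circle. The symmetry that drops the locus onto the major axis comes from the reflection $y \to -y$ of the billiard, which sends one 3-periodic to another inverted triangle and forces the locus center to lie on the $x$-axis.

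To make the collapse rigorous I would fall back on the Section~\ref{sec:prelims} protocol: parametrize the family starting from an isosceles 3-periodic with $P_1=(a,0)$ using Appendix~\ref{app:vertices-caustics}, invert through the $\rho$-circle at $f_1$ to get $Q_1,Q_2,Q_3$, form the side lengths of the inversive triangle, plug them into the trilinear/barycentric formula of $X_k$ from \cite{etc}, and verify symbolically that $|X_k^\dagger(t) - C_k^\dagger|^2$ reduces to a constant for an explicit $C_k^\dagger = (x_k,0)$. The axis-invariance of $C_k^\dagger$ can be read off from the reflection symmetry argument above.

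The main obstacle is twofold. Geometrically, the non-trivial step is the collapse from elliptic loci in $\mathcal{C}^\dagger$'s intrinsic frame to circular loci in the lab frame; this requires showing that the instantaneous rotation rate of $\mathcal{C}^\dagger$ is exactly synchronized with the intrinsic-frame angular velocity of $X_k$ along its ellipse, with the translation by $\bar{X}_9(t)$ providing the necessary offset. Computationally, the verification is inherently case-by-case across 28 centers whose trilinear formulas vary considerably in degree and complexity (some involve sixth-degree symmetric polynomials in the side lengths), and the simplification required to expose constancy relies on the special identities satisfied by billiard 3-periodic side lengths. The same computation will also identify \emph{why} $X_{88}$ (elliptic in the standard case) fails to produce a circle here, namely because its trilinear coefficients interact with the specific couplings between $\phi(t)$ and $\bar{X}_9(t)$ in a way that prevents the collapse.
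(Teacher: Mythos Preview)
Your proposal ultimately lands on the same approach the paper uses: the paper's entire proof is ``Symbolic manipulation and numeric verification,'' i.e., exactly the Section~\ref{sec:prelims} protocol you describe as your fallback. So on that level you are aligned with the paper.

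The geometric overlay you add---pulling back to the co-moving frame of $\mathcal{C}^\dagger$ and invoking \cite{garcia2020-ellipses}---is appealing as motivation, but as you yourself flag, it does not close the argument. Knowing that $X_k^\dagger$ traces an ellipse in the intrinsic frame and that the frame undergoes a one-parameter rigid motion does not by itself force the lab-frame locus to be a circle; that would require the specific synchronization between the intrinsic elliptic motion, the rotation $\phi(t)$, and the translation $\bar{X}_9(t)$ that you never establish. (Indeed, such a synchronization must fail for $X_{88}$, which is on the elliptic-locus list of \cite{garcia2020-ellipses} yet does not yield a circle here.) A second subtlety: you implicitly assume that as the original 3-periodic sweeps its family, the inversive triangle sweeps the \emph{entire} 3-periodic family of $\mathcal{C}^\dagger$ in the co-moving frame; this is plausible but unproved. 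Since you defer both points to the symbolic check anyway, the geometric layer is heuristic scaffolding rather than a genuine reduction.

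Your separate handling of $X_9^\dagger$ via the Mittenpunkt-at-center fact, and the $y\mapsto -y$ symmetry argument for why all $C_k^\dagger$ sit on the major axis, are clean additions the paper does not spell out.
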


\begin{proof}
Symbolic manipulation and numeric verification.
\end{proof}

Through painstaking CAS-assisted simplification, we were able to obtain compact expressions for only a few of the above circular loci, namely: $X_k,k=$1, 2, 3, 4, 5, 9, 11, 100. 

\begin{proposition}
The locus of $X_1^\dagger$ is the circle given by:
\begin{align*}
C_1^\dagger=&\left[c\left(-1+\rho^2\frac{-2a^2+b^2+2\delta}{2b^4}\right), 0\right]\\
R_1^\dagger=&\rho^2\frac{-2\delta^2+b^4+(2a^2-b^2)\delta}{2ab^4}
\end{align*}
\end{proposition}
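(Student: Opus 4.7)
The plan is to follow the general proof template described in Section~\ref{sec:prelims}. First, use Appendix~\ref{app:vertices-caustics} to parametrize the billiard 3-periodic family: start from the isosceles reference configuration with $P_1=(a,0)$ and let a parameter $t$ (say, the elliptic-function parameter along the confocal caustic) advance the first vertex so that $P_1(t),P_2(t),P_3(t)$ trace out the full Poncelet family. Then form the focus-inversive vertices
\[ P_i^\dagger(t) \;=\; f_1 \;+\; \frac{\rho^2}{|P_i(t)-f_1|^2}\bigl(P_i(t)-f_1\bigr),\qquad i=1,2,3, \]
where $f_1=(-c,0)$.

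Next compute the three side lengths $\ell_1^\dagger,\ell_2^\dagger,\ell_3^\dagger$ of the inversive triangle (with $\ell_i^\dagger$ opposite $P_i^\dagger$) and assemble the incenter via the usual side-length-weighted barycentric combination
\[ X_1^\dagger(t) \;=\; \frac{\ell_1^\dagger\,P_1^\dagger \,+\, \ell_2^\dagger\,P_2^\dagger \,+\, \ell_3^\dagger\,P_3^\dagger}{\ell_1^\dagger+\ell_2^\dagger+\ell_3^\dagger}. \]
A useful simplification is that the denominator equals the invariant perimeter $L^\dagger$ of Proposition~\ref{prop:n3-per}, removing one source of $t$-dependence. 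To locate the candidate center $C_1^\dagger$, I would exploit the reflective symmetry of the billiard and of the inversion across the major axis: under $y\mapsto -y$ the family maps to itself, so the locus is symmetric about the $x$-axis and its center must lie on it. Evaluating $X_1^\dagger$ at the two axial configurations $P_1=(a,0)$ and $P_1=(-a,0)$ gives two points on the $x$-axis; their midpoint is the conjectured $C_1^\dagger$ and half their separation is the conjectured $R_1^\dagger$, which I would check against the stated formulas.

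The actual proof is then the identity
\[ \bigl|X_1^\dagger(t)-C_1^\dagger\bigr|^{2} \;-\; (R_1^\dagger)^{2} \;\equiv\; 0 \]
for all $t$. This is step (iv) of the paper's methodology: an algebraic identity to be discharged by CAS, with numerical cross-verification at several aspect ratios $a/b$ and inversion radii $\rho$.

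The main obstacle is purely algebraic complexity. The coordinates $P_i^\dagger(t)$ already involve the caustic semi-axes and the radical $\delta=\sqrt{a^{4}-a^{2}b^{2}-b^{4}}$; the side lengths $\ell_i^\dagger$ compound this; and the weighted incenter combination produces a rational function in $t$ whose reduction to a constant is not feasible by hand. One could attempt a more geometric shortcut using Theorem~1: since the inversive family is a billiard 3-periodic family of the rigidly-moving ellipse $\mathcal{C}^\dagger$, in the frame of $\mathcal{C}^\dagger$ the incenter $X_1^\dagger$ traces an ellipse along the axis of $\mathcal{C}^\dagger$, and we need the composition of that 1-parameter elliptic motion with the rigid motion of $\mathcal{C}^\dagger$ (center on a circle about $C_9^\dagger$, coupled rotation) to collapse to a circle. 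Making that collapse visible geometrically is attractive but still needs the explicit coupling between the rotation angle of $\mathcal{C}^\dagger$ and the position of $\bar{X}_9$ on its circle; short of that, the CAS verification of the above identity remains the most direct route, and is what I would execute to obtain the stated $C_1^\dagger$ and $R_1^\dagger$.
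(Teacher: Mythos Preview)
Your proposal is correct and follows essentially the same route as the paper: the paper gives no proposition-specific argument here but invokes the general CAS template from Section~\ref{sec:prelims} (parametrize the 3-periodic via Appendix~\ref{app:vertices-caustics}, build the derived quantity symbolically, simplify, and verify across configurations), and your write-up is a faithful and somewhat more detailed instantiation of that template for $X_1^\dagger$. The only minor deviation is cosmetic: the paper parametrizes directly by $P_1=(x_1,y_1)$ on the billiard rather than by an auxiliary caustic parameter $t$, and it does not spell out the two-axial-configurations trick or the Theorem~1 shortcut you sketch --- those are your own (reasonable) embellishments, not departures in method.
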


\begin{proposition}
The locus of $X_2^\dagger$ is the circle given by:

\begin{align*}
C_2^\dagger&=\left[-c\left(1+\rho^2\frac{2a^2-b^2-\delta}{3 a^2 b^2}\right),0\right]\\
R_2^\dagger&=\rho^2\frac{2 a^2-b^2-\delta}{3 a b^2}
\end{align*}
\end{proposition}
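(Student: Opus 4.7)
The plan is to follow the CAS pipeline outlined in Section~\ref{sec:prelims}. Using the explicit 3-periodic vertex parametrization $P_i(t)=(x_i(t),y_i(t))$ of Appendix~\ref{app:vertices-caustics} together with the standard focal-distance identity $|P_i-f_1|=a+\varepsilon x_i$, each inverted vertex becomes
$$Q_i(t) \;=\; f_1 \;+\; \frac{\rho^2\,(P_i(t)-f_1)}{(a+\varepsilon x_i(t))^2}, \qquad f_1=(-c,0),$$
and the inversive centroid is simply
$$X_2^\dagger(t) \;=\; \frac{1}{3}\bigl(Q_1(t)+Q_2(t)+Q_3(t)\bigr),$$
an explicit rational-trigonometric curve in $t$.

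Next, by the reflective symmetry of the billiard and of the 3-periodic family across the major axis, any circular locus must be centered on that axis, so the candidate center has the form $C_2^\dagger=(c_x,0)$. I would pin down $c_x$ and the candidate radius $R_2^\dagger$ by evaluating $X_2^\dagger$ at the two distinguished isosceles configurations $P_1=(\pm a,0)$: the midpoint of the two outputs gives $c_x$ and half their separation gives $R_2^\dagger$. Matching against the formulas asserted in the proposition closes this step.

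The final and hardest step is to verify the identity
$$\bigl|X_2^\dagger(t)-C_2^\dagger\bigr|^2 \;\equiv\; \bigl(R_2^\dagger\bigr)^2.$$
After clearing the common denominator $\prod_i(a+\varepsilon x_i(t))^2$, this reduces to a polynomial identity in $(\cos t,\sin t,a,b,\rho)$ modulo $\delta^2=a^4-a^2b^2-b^4$ together with the two caustic-intersection relations from Appendix~\ref{app:vertices-caustics} that define the 3-periodic. The principal obstacle is expression swell: the numerator has high trigonometric degree, and the three squared focal distances couple all vertices, so direct expansion generates thousands of terms. To manage this I would rewrite the sum using the elementary symmetric functions of the $x_i(t)$ (these are low-degree trigonometric expressions coming from the cubic whose roots are the vertex abscissas) and invoke the Joachimsthal invariant along with Proposition~\ref{prop:n3-per}'s constant perimeter to collapse cross-terms. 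A final numerical sweep across several aspect ratios $a/b$ and starting parameters $t$, as the authors do for the other propositions in this section, would confirm the identity.
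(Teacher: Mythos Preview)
Your proposal is correct and follows essentially the same route as the paper: the authors state explicitly in Section~\ref{sec:prelims} that all such propositions are handled by taking the vertex parametrization of Appendix~\ref{app:vertices-caustics}, forming the relevant quantity symbolically, simplifying with CAS, and then checking numerically across configurations and aspect ratios; the only proof offered for the circular-locus theorem covering $X_2^\dagger$ is the one line ``Symbolic manipulation and numeric verification.'' Your outline is in fact more explicit than the paper's, spelling out the symmetry reduction, the use of the two isosceles configurations to read off $C_2^\dagger$ and $R_2^\dagger$, and the symmetric-function bookkeeping to tame expression swell.
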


\begin{proposition}
The locus of $X_3^\dagger$ is the circle given by:
\begin{align*}
C_3^\dagger=&\left[-c\left(1+\rho^2\frac{a^2+b^2}{2b^4}\right), 0\right]\\
R_3^\dagger=&\rho^2\frac{a(-b^2+\delta)}{2b^4}
\end{align*}
\end{proposition}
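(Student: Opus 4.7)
My plan is to follow the symbolic strategy already adopted throughout this section (see the paragraph on proof method in the Preliminaries). Using Appendix~\ref{app:vertices-caustics}, I would parameterize the family of 3-periodic orbits by the coordinates of $P_1$ along the billiard, starting from the isosceles configuration $P_1=(a,0)$, and then form the three focus-inversions $P_i^\dagger = f_1 + \rho^2 (P_i - f_1)/|P_i - f_1|^2$ with $f_1=(-c,0)$. The circumcenter $X_3^\dagger$ is the intersection of the perpendicular bisectors of $P_1^\dagger P_2^\dagger$ and $P_2^\dagger P_3^\dagger$, yielding an explicit rational expression in the parameter and in $a, b, \rho$.

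The task then reduces to verifying the single identity $|X_3^\dagger - C_3^\dagger|^2 \equiv (R_3^\dagger)^2$ for all values of the parameter, with $C_3^\dagger$ and $R_3^\dagger$ as claimed. Clearing denominators and eliminating $\delta$ using $\delta^2 = a^4 - a^2 b^2 - b^4$ turns this into a polynomial identity in $a, b$, $\rho^2$, and the vertex parameter, which a CAS can verify directly; the fact that $C_3^\dagger$ lies on the major axis is checked separately by noting that the $y$-coordinate of $X_3^\dagger$ averages to zero over the two configurations symmetric about the major axis.

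As a structural sanity check (and as a way to anticipate the form of the answer), I would use the Theorem of this section identifying the focus-inversive family as a 3-periodic billiard family of the rigidly-moving circumbilliard $\mathcal{C}^\dagger$, whose semi-axes $a^\dagger, b^\dagger$ are invariant and whose center $\bar{X}_9$ travels on a known circle. In the moving frame of $\mathcal{C}^\dagger$ the locus of the circumcenter of a 3-periodic is a classical ellipse with axes along the billiard axes; in the fixed frame $X_3^\dagger$ is the image of that ellipse under the composition of the rotation of $\mathcal{C}^\dagger$ and the circular translation of $\bar{X}_9$. Circularity of the fixed-frame locus is thus equivalent to a precise phase-locking between the rotation angle of $\mathcal{C}^\dagger$ and the position of $X_3^\dagger$ along its moving elliptic locus. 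The main obstacle is the final simplification: intermediate expressions contain large polynomials with nested $\delta$-dependent factors, which is precisely why, as the paper notes, compact closed forms for circular loci have been obtained for only a handful of centers. In practice I would split the verification into two scalar identities, rationalize $\delta$ systematically, and numerically sample several choices of $(a, b, \rho, t)$ as a guard against branch errors introduced by the square roots.
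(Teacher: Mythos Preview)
Your main approach is essentially the paper's own: the paper omits the proof of this proposition, pointing instead to the general method in the Preliminaries (explicit vertex parametrization from Appendix~\ref{app:vertices-caustics}, symbolic computation of the center of interest, CAS simplification, and symbolic/numeric verification across configurations), and the overarching Theorem in this section is proved by ``symbolic manipulation and numeric verification.'' Your plan to reduce everything to the single polynomial identity $|X_3^\dagger - C_3^\dagger|^2 = (R_3^\dagger)^2$ after clearing $\delta$ is exactly this program; the symmetry argument for the center lying on the major axis is fine (indeed the whole family is invariant under reflection in that axis, so any circular locus must be centered on it).

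Your third paragraph, invoking the rigidly-moving circumbilliard $\mathcal{C}^\dagger$ and the known elliptic $X_3$-locus in a billiard frame, is a genuinely different and more conceptual angle than anything the paper uses here. It does not by itself prove circularity (as you note, one still needs the phase-locking between the rotation of $\mathcal{C}^\dagger$ and the motion along the inner elliptic locus), but it explains \emph{why} a circle is plausible and predicts the center/radius up to that synchronization, which the paper's purely symbolic route does not illuminate.
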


\begin{proposition}
The locus of $X_4^\dagger$ is the circle given by:
\begin{align*}
C_4^\dagger=&\left[c\left(-1+\rho^2\frac{(b^2+\delta) \delta}{a^2 b^4}\right), 0\right]\\
R_4^\dagger=&\rho^2\frac{c^2(b^2+\delta)}{a b^4}
\end{align*}
\end{proposition}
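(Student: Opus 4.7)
The plan is to follow the symbolic recipe described in Section~\ref{sec:prelims}: parametrize the three 3-periodic vertices in closed form using Appendix~\ref{app:vertices-caustics}, apply the focus inversion $\iota(P) = f_1 + \rho^2 (P - f_1)/|P - f_1|^2$ to each vertex to obtain $P_i^\dagger$, and then compute the orthocenter $X_4^\dagger$ of the triangle $(P_1^\dagger, P_2^\dagger, P_3^\dagger)$ as the unique solution of the linear system
\[
(X_4^\dagger - P_1^\dagger)\cdot (P_2^\dagger - P_3^\dagger) = 0,\qquad
(X_4^\dagger - P_2^\dagger)\cdot (P_1^\dagger - P_3^\dagger) = 0.
\]
The final step is to verify the identity $|X_4^\dagger - C_4^\dagger|^2 \equiv (R_4^\dagger)^2$ as $P_1$ varies along the billiard, using the $C_4^\dagger$ and $R_4^\dagger$ stated in the proposition.

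A useful shortcut exploits the reflection symmetry of the problem about the major axis: if the locus of $X_4^\dagger$ is a circle, its center lies on that axis, so it is pinned down by two scalars. Evaluating $X_4^\dagger$ at the two distinguished isosceles 3-periodics with $P_1 = (\pm a, 0)$ yields two $x$-axis points that already determine the candidate center and radius; a short calculation then reproduces the formulas in the statement, turning the proposition into a verification. The main obstacle is algebraic bulk. Composing ``billiard vertex $\to$ inversion $\to$ orthocenter $\to$ squared distance to $C_4^\dagger$'' yields a rational function of the $P_1$ parameter whose numerator has high degree and must be reduced modulo the caustic relations that link $P_1, P_2, P_3$ together with $\delta^2 = a^4 - a^2 b^2 - b^4$. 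A Gr\"obner-basis reduction, or a CAS-driven factor/collect loop backed by numerical spot checks at several aspect ratios $a/b$, should close the argument.

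A more conceptual route, which I would run in parallel as a consistency check and as a potential unified explanation, uses the theorem at the end of Section~\ref{sec:n3-inv-billiard}: the inversive family is a 3-periodic billiard family of a rigidly-moving circumbilliard $\mathcal{C}^\dagger$ whose semi-axes are invariant and whose center $\bar{X}_9$ traces a circle. Since $X_4$ has an elliptic locus over ordinary 3-periodics \cite{garcia2020-ellipses}, it must sweep an ellipse of fixed shape in the moving frame of $\mathcal{C}^\dagger$; the locus in the lab frame is then the image of that ellipse under a translation along the $\bar{X}_9$-circle composed with the rotation that aligns $\mathcal{C}^\dagger$. Showing that this composition degenerates to a circle amounts to verifying a single coupling relation between the rotation rate of $\mathcal{C}^\dagger$ and the translation radius of $\bar{X}_9$ -- a single identity which, if established, would simultaneously explain all 28 circular loci listed in the main theorem of Section~\ref{sec:n3-loci}.
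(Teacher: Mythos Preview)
Your primary approach---parametrize the billiard vertices via Appendix~\ref{app:vertices-caustics}, invert about $f_1$, compute the orthocenter from the linear altitude system, and then verify the circle identity by CAS reduction backed by numerical spot checks---is exactly the paper's declared method (Section~\ref{sec:prelims}) and is what underlies the one-line proof ``symbolic manipulation and numeric verification'' given for the enclosing theorem in Section~\ref{sec:n3-loci}. Your refinement of pinning down $C_4^\dagger$ and $R_4^\dagger$ from the two axis-symmetric configurations $P_1=(\pm a,0)$ before verifying is a natural sharpening of the paper's recipe (which uses only $P_1=(a,0)$ to guess and then checks at several configurations).
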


\begin{proposition}
The locus of $X_5^\dagger$ is the circle given by:
\begin{align*}
C_5^\dagger=&\left[c\left(-1+ \rho^2\frac{a^4 - 3 a^2 b^2+2 b^4 + 2 b^2 \delta}{4 a^2 b^4}\right), 0\right]\\
R_5^\dagger=&\rho^2\frac{(3a^2-2b^2)b^2+(a^2-2b^2)\delta}{4a b^2}
\end{align*}
\end{proposition}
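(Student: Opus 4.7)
The plan is to follow the paper's standard symbolic strategy. Using the explicit 3-periodic vertex parametrization of Appendix~\ref{app:vertices-caustics} and applying the focal inversion
\[
P_i^\dagger(t) = f_1 + \rho^2 \frac{P_i(t)-f_1}{|P_i(t)-f_1|^2}
\]
yields the three inversive vertices as algebraic functions of $t$ and $\delta$. The nine-point center $X_5^\dagger$ is then computed either from its standard Cartesian formula or, more conveniently, as the Euler-line midpoint $\tfrac{1}{2}(X_3^\dagger + X_4^\dagger)$.

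The cleanest route leverages the two preceding propositions. Since the loci of $X_3^\dagger$ and $X_4^\dagger$ are already known to be circles centered on the major axis, one checks whether they are parametrized coherently in $t$ (same angular speed, up to a constant phase shift). If so, the locus of the midpoint $X_5^\dagger$ is automatically a circle, and its center and radius can be read off up to identities modulo $\delta^2 = a^4 - a^2 b^2 - b^4$. If the phases differ, I would instead parametrize $X_5^\dagger(t)$ directly and eliminate $t$ from the coordinate equations to recover $|X_5^\dagger - C_5^\dagger|^2 = (R_5^\dagger)^2$.

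To fix candidate expressions for $C_5^\dagger$ and $R_5^\dagger$, I would evaluate $X_5^\dagger$ at the two isosceles 3-periodic configurations $P_1 = (\pm a, 0)$: in each of these the inversive triangle is symmetric about the major axis, $X_5^\dagger$ lies on that axis, and the two extremal $x$-coordinates of the putative locus circle are obtained in closed form with essentially no algebra. Their mean yields the candidate $C_5^\dagger$ and half their separation yields $R_5^\dagger$, so matching against the stated formulas is then a verification rather than a derivation.

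The main obstacle is computational rather than conceptual: the raw parametric expressions are large rational functions whose coefficients involve $\delta$, and the reductions needed to pass to the compact forms stated require the combined CAS-and-human simplification procedure described in Section~\ref{sec:prelims}, carried out in the quotient ring $\mathbb{Q}(a,b,\rho,t)[\delta]/(\delta^2 - a^4 + a^2 b^2 + b^4)$, followed by numerical spot-checks across several values of $a/b$ and $t$. No essentially new idea beyond those already used for $X_1^\dagger$ through $X_4^\dagger$ should be required.
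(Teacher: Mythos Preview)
Your proposal is correct and follows essentially the same methodology the paper describes in Section~\ref{sec:prelims} and in the one-line proof of Theorem~2: CAS-assisted symbolic manipulation of the explicit vertex parametrization, followed by numerical verification across configurations. Your additional shortcut via the Euler-line identity $X_5^\dagger=\tfrac12(X_3^\dagger+X_4^\dagger)$ is not used in the paper but is a sound refinement in the same spirit; note that the two circular loci turn out to be traversed with a relative phase of $\pi$, so the radius comes out as $\tfrac12|R_4^\dagger-R_3^\dagger|$ rather than $\tfrac12(R_3^\dagger+R_4^\dagger)$.
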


\begin{proposition}
The locus of $X_{11}^\dagger$ is the circle given by:

\begin{align*}
C_{11}^\dagger&=\left[c\left(-1+\rho^2\frac{-a^2+b^2+\delta }{2 a^2 b^2}\right),0\right]\\
R_{11}^\dagger&=\rho^2\frac{-a^2+b^2+\delta}{2 a b^2}
\end{align*}
\end{proposition}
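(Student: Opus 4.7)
The plan is to follow the symbolic strategy outlined at the beginning of Section~\ref{sec:n3-inv-billiard} and already used to verify the preceding propositions on $X_1^\dagger,\dots,X_5^\dagger$. I would start with the explicit parametrization of billiard $3$-periodic vertices from Appendix~\ref{app:vertices-caustics}, writing the three vertices $P_1(\theta), P_2(\theta), P_3(\theta)$ as closed-form functions of a single parameter $\theta$ (for instance the arc-parameter of $P_1$ on the billiard ellipse, or the parameter on the confocal caustic). Applying the focus-centred inversion
\[ P_i^\dagger \;=\; f_1 + \rho^{2}\,\frac{P_i-f_1}{|P_i-f_1|^{2}},\qquad f_1=(-c,0), \]
then yields a closed-form parametrization of the focus-inversive triangle.

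Next, I would compute $X_{11}^\dagger$ from this triangle using the standard barycentric representation of the Feuerbach point,
\[ X_{11} \;=\; \bigl((\ell_{2}-\ell_{3})^{2}(\ell_{2}+\ell_{3}-\ell_{1})\,:\,(\ell_{3}-\ell_{1})^{2}(\ell_{3}+\ell_{1}-\ell_{2})\,:\,(\ell_{1}-\ell_{2})^{2}(\ell_{1}+\ell_{2}-\ell_{3})\bigr), \]
where $\ell_{i}=|P_{i+1}^{\dagger}-P_{i+2}^{\dagger}|$. The sidelengths admit the crucial simplification
\[ |P_i^\dagger-P_j^\dagger|\;=\;\rho^{2}\,\frac{|P_i-P_j|}{|P_i-f_1|\,|P_j-f_1|}, \]
which, combined with the classical expression $|P_i-f_1|=a-\varepsilon\,x_i$ for the focal distance to a point on the billiard, reduces each $\ell_i$ to a comparatively tame function of the original billiard coordinates. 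Converting the barycentric output to Cartesian coordinates then produces $X_{11}^\dagger(\theta)=(x(\theta),y(\theta))$.

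Verification reduces to an algebraic identity: one checks that
\[ \bigl(x(\theta)-C_{11,x}^{\dagger}\bigr)^{2}+y(\theta)^{2}-(R_{11}^{\dagger})^{2}\;\equiv\;0 \]
after substitution, modulo the relation linking $\theta$ to the invariant caustic parameters. The reflection symmetry about the major axis available whenever one chooses the seed configuration $P_1=(a,0)$ automatically places the centre of the locus on the $x$-axis, which is consistent with the stated form of $C_{11}^\dagger$; the value $C_{11,x}^\dagger$ can then be read off from $x(0)$ and $x(\theta_{\max})$ as the mean of two extreme abscissae, and $R_{11}^\dagger$ from their half-difference, giving an educated guess that is then checked identically.

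The principal obstacle is algebraic rather than structural. The combination of the billiard discriminant $\delta=\sqrt{a^{4}-a^{2}b^{2}-b^{4}}$ arising from the caustic, a second square root coming from the $3$-periodic parametrization, and the cubic combinations of sidelengths in the barycentric formula for $X_{11}$ produces an intermediate expression that is large and heavily loaded with $\delta$-radicals. As with the other centres treated in this section, collapsing it to the compact form of $C_{11}^\dagger$ and $R_{11}^\dagger$ relies on systematic CAS-driven simplification with human guidance to eliminate the correct branch of $\delta$ at each stage; I do not expect a short synthetic shortcut.
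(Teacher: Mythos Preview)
Your proposal is correct and follows essentially the same route the paper uses for all of these propositions: start from the explicit $3$-periodic vertex parametrization in Appendix~\ref{app:vertices-caustics}, invert about $f_1$, compute the relevant centre symbolically, and then rely on CAS-driven simplification together with symbolic/numeric verification across configurations. If anything, your plan is slightly more rigorous than the paper's stated method, since you aim for an identical-in-$\theta$ check rather than the paper's ``verify over several configurations''; one minor slip is the sign in the focal-radius formula (with $f_1=(-c,0)$ one has $|P_i-f_1|=a+\varepsilon x_i$), but this does not affect the strategy.
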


\begin{proposition}
The locus of $X_{100}^\dagger$ is the circle given by:

\begin{align*}
C_{100}^\dagger&=\left[-c\left(1+\rho^2\frac{1}{b^2}\right),0\right]\\
R_{100}^\dagger&=\rho^2\frac{a}{b^2}
\end{align*}
\end{proposition}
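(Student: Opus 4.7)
The plan is to follow the CAS-assisted pipeline outlined in the ``word about our proof method'' paragraph of Section~\ref{sec:prelims}, specialized to $X_{100}$. First, I would use Appendix~\ref{app:vertices-caustics} to parametrize the three 3-periodic vertices $P_1(t),P_2(t),P_3(t)$ on the billiard~\eqref{eqn:billiard-f}, and apply the inversion $P_i^\dagger = f_1 + \rho^2(P_i-f_1)/|P_i-f_1|^2$ to obtain the focus-inversive vertices. The classical inversive-distance identity $|P_i^\dagger - P_j^\dagger| = \rho^2|P_i-P_j|/(|P_i-f_1|\,|P_j-f_1|)$ then yields the side lengths $a^\dagger,b^\dagger,c^\dagger$ of the inversive triangle, which feed the standard trilinear description of $X_{100}$, namely $1/(b^\dagger-c^\dagger):1/(c^\dagger-a^\dagger):1/(a^\dagger-b^\dagger)$.

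Second, after converting those trilinears to Cartesian form using the $P_i^\dagger$, I would substitute into the candidate identity $|X_{100}^\dagger(t)-C_{100}^\dagger|^2 \equiv (R_{100}^\dagger)^2$ with $C_{100}^\dagger=(-c(1+\rho^2/b^2),\,0)$ and $R_{100}^\dagger=\rho^2 a/b^2$, and verify it as a symbolic identity modulo the constraints that the $P_i$ lie on the billiard, that $\{P_1,P_2,P_3\}$ is a 3-periodic orbit (fixing the caustic and Joachimsthal's integral), and the minimal polynomial $\delta^2 = a^4-a^2b^2-b^4$. Numerical spot-checks at several aspect ratios $a/b$ and orbit parameters would accompany the symbolic verification, exactly as in the authors' general methodology.

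A potential geometric shortcut is worth noting: since $X_{100}$ lies on the circumcircle of its reference triangle, $X_{100}^\dagger$ lies on the circumcircle of the inversive triangle, which coincides with the $f_1$-inversion of the 3-periodic's circumcircle. If one can identify $X_{100}^\dagger$ as the second intersection of this inverted circumcircle with some fixed line or curve through $f_1$, the circularity of the locus might drop out more cleanly. However, the main obstacle will be algebraic: the raw Cartesian expression for $X_{100}^\dagger(t)$ carries several nested radicals coming from the elliptic parametrization and from the differences $b^\dagger-c^\dagger$, $c^\dagger-a^\dagger$, $a^\dagger-b^\dagger$ in the denominators of the trilinears, and collapsing it to the very compact circle stated in the proposition will require careful regrouping and human-guided CAS simplification rather than a naive expansion.
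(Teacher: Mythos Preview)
Your proposal is correct and follows essentially the same CAS-assisted pipeline the paper uses: parametrize the 3-periodic via Appendix~\ref{app:vertices-caustics}, invert the vertices, compute the relevant triangle center symbolically, and then simplify/verify the circle equation both symbolically and numerically across configurations. The paper offers no independent argument for this particular proposition beyond ``painstaking CAS-assisted simplification,'' so your plan matches it; your aside about exploiting that $X_{100}$ lies on the circumcircle (hence $X_{100}^\dagger$ lies on the inverted circumcircle) is a nice potential refinement the paper does not pursue.
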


As noted above, the focus-inversive Gergonne point $X_7^\dagger$ is stationary on the major axis.

\begin{figure}
    \centering
    \includegraphics[width=\textwidth]{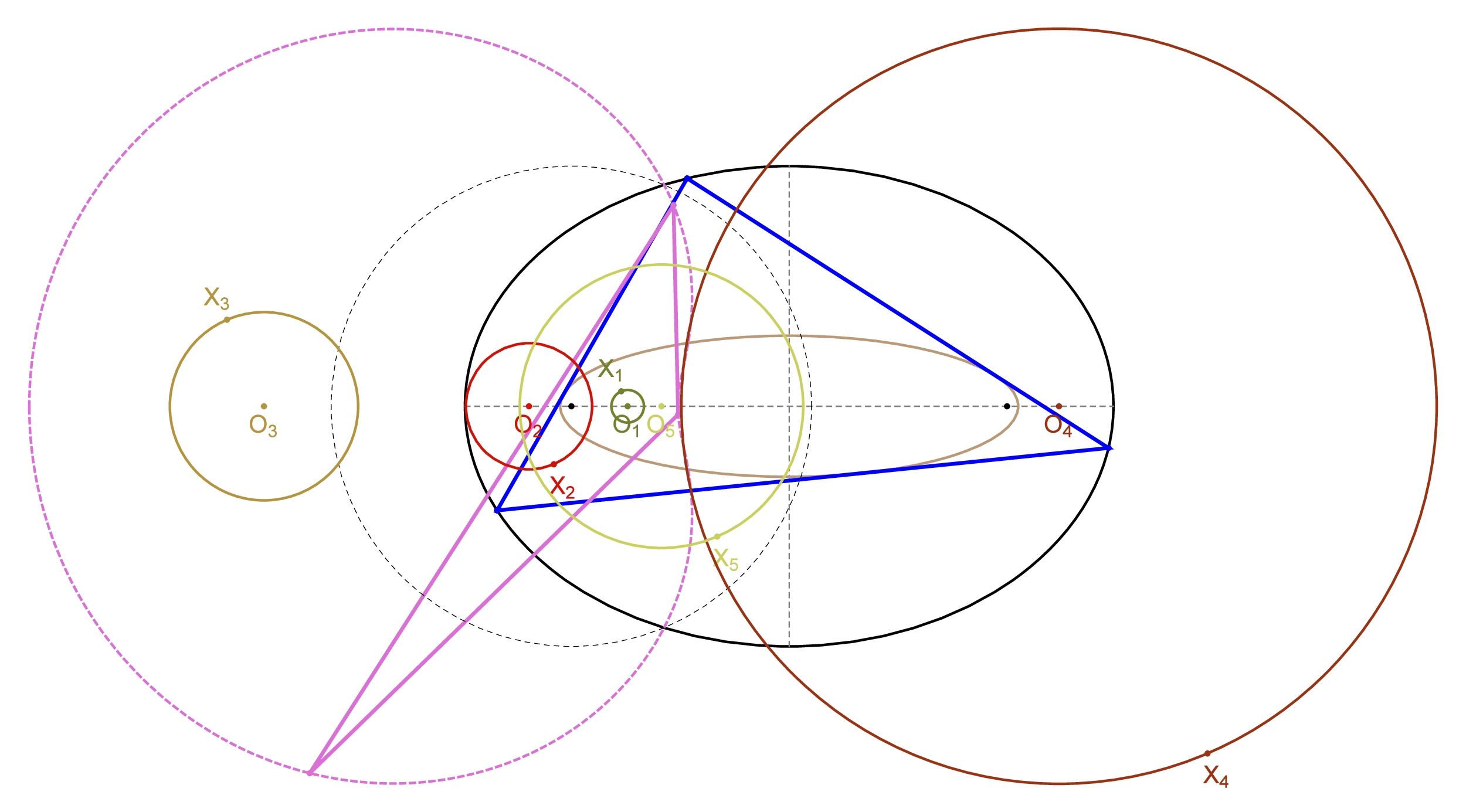}
    \caption{The focus-inversive 3-periodic (pink) is shown inscribed in Pascal's Limaçon (dashed pink). Also shown are the circular loci of $X_k^\dagger$, $k=1,2,3,4,5$. Notice all its centers $O_i$ lie on the billiard's major axis. \href{https://youtu.be/OAD2hpCRgCI}{Video}}
    \label{fig:n3-loci-12345}
\end{figure}

In \cite{reznik2020-ballet} triangle centers whose locus over 3-periodics was the billiard boundary were called ``swans''. As it was shown, both $X_{88}$ and $X_{100}$ are swans \cite{reznik2020-ballet}. Curiously, the former's (resp. latter's) motion is non-monotonic (monotonic) with respect to a continuous clockwise parametrization of 3-periodics. 

Referring to \cref{fig:x159-x934}:

\begin{observation}
Amongst all 29 triangle centers whose loci are ellipses over billiard 3-periodics, only $X_{88}^\dagger$ does not sweep a circular locus over the focus-inversive family. Nevertheless, this locus is algebraic, regular, can be both convex and concave, and is symmetric wrt the major axis of the ellipse.
\end{observation}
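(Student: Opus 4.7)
The plan is to dispatch the several assertions (uniqueness as the only non-circular case, algebraicity, regularity, symmetry, and the convex/concave dichotomy) by combining the results already established in this section with targeted CAS computations specialized to $X_{88}^\dagger$. The preceding theorem accounts for 28 of the 29 elliptic-locus centers as circular, and the earlier proposition on $X_7$ shows that $X_7^\dagger$ is stationary, i.e., traces a degenerate circle of radius zero. The only remaining candidate for a non-circular locus among the 29 is therefore $X_{88}$, so the uniqueness half of the claim reduces to verifying that $X_{88}^\dagger$ is genuinely not a circle.

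To do this, I would parametrize a 3-periodic $P_1P_2P_3$ via the explicit vertex expressions in Appendix~\ref{app:vertices-caustics}, substitute them into the standard barycentric formula for $X_{88}$ (a rational function of the side lengths), apply the $f_1$-centered inversion $P\mapsto f_1+\rho^2(P-f_1)/|P-f_1|^2$ to obtain $X_{88}^\dagger(t)$, and test the resulting curve against the generic circle equation $(x-x_0)^2+(y-y_0)^2=r^2$. Fitting the three parameters $(x_0,y_0,r)$ to three sample values of $t$ and substituting a fourth sample into the equation yields a residual polynomial in $(a,b,\rho)$; checking by CAS that this residual is not identically zero disproves circularity for generic billiards, and a single concrete numerical example disproves it for any particular one.

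Algebraicity of the locus is automatic: setting $u=\cos t$, $v=\sin t$, the coordinate functions $x(t)$ and $y(t)$ become rational in $(u,v)$, and eliminating $(u,v)$ against the constraint $u^2+v^2=1$ via a resultant produces an implicit polynomial $F(x,y)=0$ defining the locus. Symmetry with respect to the major axis follows from the $y\mapsto-y$ invariance of both the billiard and the inversion circle together with the combinatorial definition of $X_{88}$: reflecting a 3-periodic across the $x$-axis produces another 3-periodic whose focus-inversive $X_{88}$ is the reflection of the original, so the locus is invariant under $y\mapsto-y$. Regularity reduces to showing that the tangent vector $(\dot x,\dot y)$ never simultaneously vanishes; this is a single polynomial inequality in $t$ that can be checked via its discriminant (or by Sturm's theorem) uniformly over admissible $(a,b,\rho)$.

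For the convex/concave dichotomy I would compute the signed curvature $\kappa(t)=(\dot x\ddot y-\dot y\ddot x)/(\dot x^2+\dot y^2)^{3/2}$ and produce two concrete parameter choices: one with $a/b$ near $1$ where $\kappa$ has constant sign (strictly convex locus) and one with larger $a/b$ where $\kappa$ changes sign (a locus with inflection points, hence concave arcs). The boundary in the $(a/b,\rho)$ parameter plane is the vanishing locus of the discriminant of the numerator of $\kappa(t)$. The main obstacle throughout is the symbolic bulk of $X_{88}^\dagger$: unlike the centers treated in the preceding propositions, whose circular loci admitted compact formulas for center and radius, the implicit polynomial $F(x,y)$ here is of substantially higher degree and is unlikely to factor cleanly, so both the non-circularity verification and the curvature analysis will have to be carried out by resultants or Gröbner bases rather than by hand.
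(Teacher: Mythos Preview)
Your proposal contains a conceptual error in how $X_{88}^\dagger$ is computed. You write that you would compute $X_{88}$ of the billiard 3-periodic $P_1P_2P_3$ via the barycentric formula and \emph{then} apply the $f_1$-centered inversion to that point. But throughout the paper, $X_k^\dagger$ denotes the triangle center $X_k$ \emph{of the focus-inversive triangle} $P_1^\dagger P_2^\dagger P_3^\dagger$, not the inverse image of $X_k$ of the original 3-periodic. Inversion does not commute with taking triangle centers, so these two constructions produce genuinely different curves, and your pipeline would be analyzing the wrong locus. The correct order is: first invert each $P_i$ to obtain $P_i^\dagger$, then compute the side lengths of $P_1^\dagger P_2^\dagger P_3^\dagger$, and finally evaluate the barycentric formula for $X_{88}$ on that triangle.

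A minor bookkeeping slip: the theorem's list of 28 circular loci includes $X_9^\dagger$, and $X_9$ is \emph{not} among the 29 centers with elliptic billiard locus (it is stationary over 3-periodics). So the theorem directly covers only 27 of the 29; together with the stationarity of $X_7^\dagger$ this accounts for 28, leaving $X_{88}$ as the sole remaining case, as you intend.

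With the definition corrected, the remainder of your plan---fitting a circle to sample points to rule out circularity, eliminating the parameter to obtain an algebraic equation, exploiting the $y\mapsto -y$ symmetry of the configuration, and exhibiting two aspect ratios with opposite curvature behavior---is a reasonable CAS-based verification strategy. Note, however, that in the paper this statement is recorded as an \emph{Observation} and no proof is supplied; a corrected version of your outline would therefore go beyond what the paper itself provides rather than reproduce it.
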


Referring to \cref{fig:non-elliptic-swans}, $X_{162}$ is yet another example of a billiard ``swan'' whose focus-inversive locus is non-elliptic.

\begin{figure}
    \centering
    \includegraphics[width=.8\textwidth]{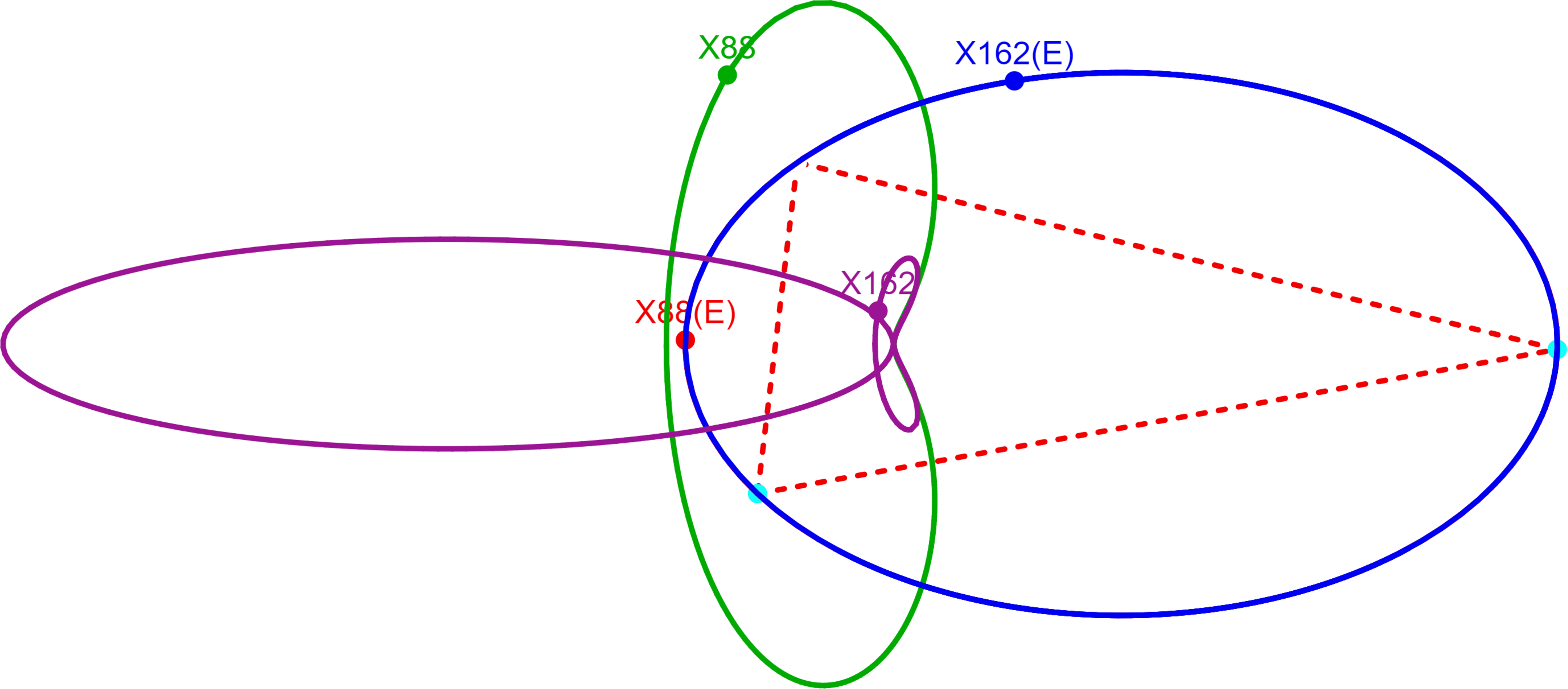}
    \caption{Over billiard 3-periodics (dashed red) the loci of both $X_{88}$ and $X_{162}$  coincide with the billiard (blue). However, when taken as centers of the the focus-inversive triangles (not shown), their loci are clearly non-elliptic (green and purple). Live app: \href{https://bit.ly/32XfPyZ}{X88+X162}, \href{https://bit.ly/3lF3TsO}{X88+X100}}
    \label{fig:non-elliptic-swans}
\end{figure}

\begin{observation}
For $101{\leq}k{\leq}1000$, the following 68 focus-inversive triangle centers also sweep circular loci: $X_k^\dagger$, $k=$104, 119, 140, 142, 144, 145, 149, 150, 153, 165, 191, 200, 210, 214, 224, 226, 329, 354, 355, 376, 377, 381, 382, 388, 390, 392, 404, 405, 411, 442, 443, 452, 474, 480, 484, 495, 496, 497, 498, 499, 546, 547, 548, 549, 550, 551, 553, 631, 632, 908, 920, 934, 936, 938, 942, 943, 944, 946, 950, 954, 956, 958, 960, 962, 993, 997, 999, 1000.
\end{observation}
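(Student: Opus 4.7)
The plan is to extend the exact methodology used for Theorem 1 in Section~\ref{sec:n3-loci} from the range $k \leq 100$ to the range $101 \leq k \leq 1000$, one index at a time. For each $k$, I pull from \cite{etc} a barycentric representation $X_k=(\alpha_k:\beta_k:\gamma_k)$ where $\alpha_k,\beta_k,\gamma_k$ are explicit algebraic functions of the side lengths. I then build a generic focus-inversive 3-periodic by composing the closed-form 3-periodic vertex parametrization from Appendix~\ref{app:vertices-caustics} with the inversion $P\mapsto f_1+\rho^2(P-f_1)/|P-f_1|^2$, obtain its three side lengths as functions of the 3-periodic parameter $t$ and the axes $a,b$, and substitute into the barycentric formula to get a curve $t\mapsto X_k^\dagger(t)=(x_k(t),y_k(t))$.

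To test circularity, I exploit the empirical regularity already documented for $k\leq 100$ that the center of every such circle lies on the major axis. I therefore posit a center $(h_k,0)$ and radius $r_k$, determine the two unknowns by matching the squared distance at two convenient configurations (for instance the two isosceles 3-periodics $P_1=(a,0)$ and $P_1=(-a,0)$, which admit very simple closed forms), and then check that
\[
F_k(t)\;=\;(x_k(t)-h_k)^2+y_k(t)^2-r_k^2
\]
vanishes identically in $t$. Where the CAS can carry it, this is done symbolically by reducing modulo the billiard constraint $f(P_i)=0$ together with the 3-periodic reflection identities; otherwise, $F_k$ is evaluated numerically at a dense sample of $t$ and at several billiard aspect ratios $a/b$, and the observed residual is compared against machine epsilon. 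For the $68$ indices in the list $F_k\equiv 0$; for every other $k$ in $[101,1000]$ the residual is non-negligible at at least one sampled $(t,a/b)$, which rules out circularity.

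For the shortlist of centers whose barycentric formula is sufficiently compact (as was done for $X_1,X_2,X_3,X_4,X_5,X_{11},X_{100}$ in the $k\leq 100$ case), one can additionally extract explicit closed forms for $h_k$ and $r_k$ analogous to the propositions of Section~\ref{sec:n3-loci}, each verified by symbolic simplification against $F_k(t)=0$.

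The main obstacle is not conceptual but computational: many centers in the range $k>100$ (for example $X_{484},X_{934},X_{944}$) have barycentrics whose expansion in $(a,b,t)$ is enormous, and full symbolic certification of $F_k\equiv 0$ is expensive. The pragmatic resolution, and the one the authors have effectively adopted throughout, is a hybrid scheme: symbolic wherever tractable, high-precision numerical verification across multiple aspect ratios otherwise. A single numerically ambiguous residual would be resolved by pushing precision or by retreating to the symbolic check on that specific index.
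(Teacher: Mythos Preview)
Your proposal is correct and matches the paper's own approach: the statement is recorded as an \emph{Observation} with no separate proof, and the paper's declared methodology (Section~\ref{sec:prelims}) is exactly the hybrid symbolic/numerical verification you describe, as is the one-line proof of the analogous $k\le 100$ result. Your write-up is in fact more explicit than the paper about how the circularity test is implemented.
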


Referring to \cref{fig:x159-x934}:

\begin{observation}
For $k{\leq}1000$, the only triangle centers whose billiard locus is not an ellipse and whose focus-inversive locus is a circle are $X_{150}$ and $X_{934}$. In fact, the latter's locus is identical to $X_{100}^\dagger$.
\end{observation}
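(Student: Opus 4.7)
The plan is to follow the paper's established ``consistent process'' for experimental/symbolic claims, since this observation is fundamentally a finite enumeration. First, using the parameterization of 3-periodic vertices from Appendix~\ref{app:vertices-caustics}, I would obtain a parametric family $P_i(t)$ of billiard 3-periodics and, via focus-inversion $Q_i(t) = f_1 + \rho^2 (P_i - f_1)/\|P_i - f_1\|^2$, the focus-inversive triangle $T^\dagger(t) = Q_1Q_2Q_3$. Next, for each $k = 1, \ldots, 1000$, I would use the barycentric coordinates of $X_k$ tabulated in \cite{etc}, evaluate them on the side lengths $\ell_{ij}(t) = \|Q_i(t) - Q_j(t)\|$, and thereby obtain the Cartesian parametrization $X_k^\dagger(t)$.

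To detect circularity I would use a two-stage filter: (i) numerically fit a circle to densely sampled points of $X_k^\dagger(t)$ for several aspect ratios $a/b$ and discard any $k$ for which the residual is not below CAS-grade tolerance; (ii) for surviving $k$, confirm symbolically that $\|X_k^\dagger(t) - C_k^\dagger\|^2 - (R_k^\dagger)^2$ simplifies identically to zero in $t$, after the caustic constraint is imposed. Cross-referencing this list of inversive-circular $k$ against the complement of the elliptic-locus list from \cite{garcia2020-ellipses} (extended to $k \leq 1000$), the intersection isolates those centers which are non-elliptic on the billiard but circular under inversion. The claim is that this intersection consists of exactly $X_{150}$ and $X_{934}$.

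For the final assertion $X_{934}^\dagger \equiv X_{100}^\dagger$ (as loci), I would first compute the circle $(C_{934}^\dagger, R_{934}^\dagger)$ using the method above and compare directly against the closed-form $(C_{100}^\dagger, R_{100}^\dagger)$ proven earlier in this section; matching center and radius gives locus equality. The cleaner, stronger version would be pointwise equality: substitute the barycentrics of $X_{100}$ and $X_{934}$ from \cite{etc} into $T^\dagger$'s side lengths and simplify the difference to zero. If this identity holds for the specific family of triangles realized as focus-inversives (rather than for all triangles), it would be witnessed by the vertex formulas of Appendix~\ref{app:vertices-caustics} together with the $N=3$ constraints ($L^\dagger$ invariant, $X_7^\dagger$ fixed).

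The main obstacle is not conceptual but computational stamina: the barycentric formulas for $X_k$ with $k$ in the high hundreds can be lengthy rational functions of $\ell_{ij}$, and circularity-testing over roughly a thousand candidates requires a carefully engineered CAS pipeline with aggressive simplification under the caustic relation. A secondary obstacle is the $X_{934}$--$X_{100}$ identity itself, since the two centers' \cite{etc} definitions look unrelated a priori; producing a clean algebraic certificate (rather than a numerical match) may require introducing auxiliary invariants such as those proved in Section~\ref{sec:n3-inv-more}.
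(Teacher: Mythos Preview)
Your proposal is correct and aligns with the paper's own methodology: the statement is labeled an \emph{Observation} and is not given any proof in the paper beyond the blanket ``symbolic manipulation and numeric verification'' process described in Section~\ref{sec:prelims}, which is exactly the two-stage numeric-then-symbolic filter you outline. If anything, your plan is more explicit and more ambitious (e.g., seeking a pointwise certificate for $X_{934}^\dagger = X_{100}^\dagger$ rather than mere locus coincidence) than what the paper actually carries out.
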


\begin{figure}
    \centering
    \includegraphics[width=\textwidth]{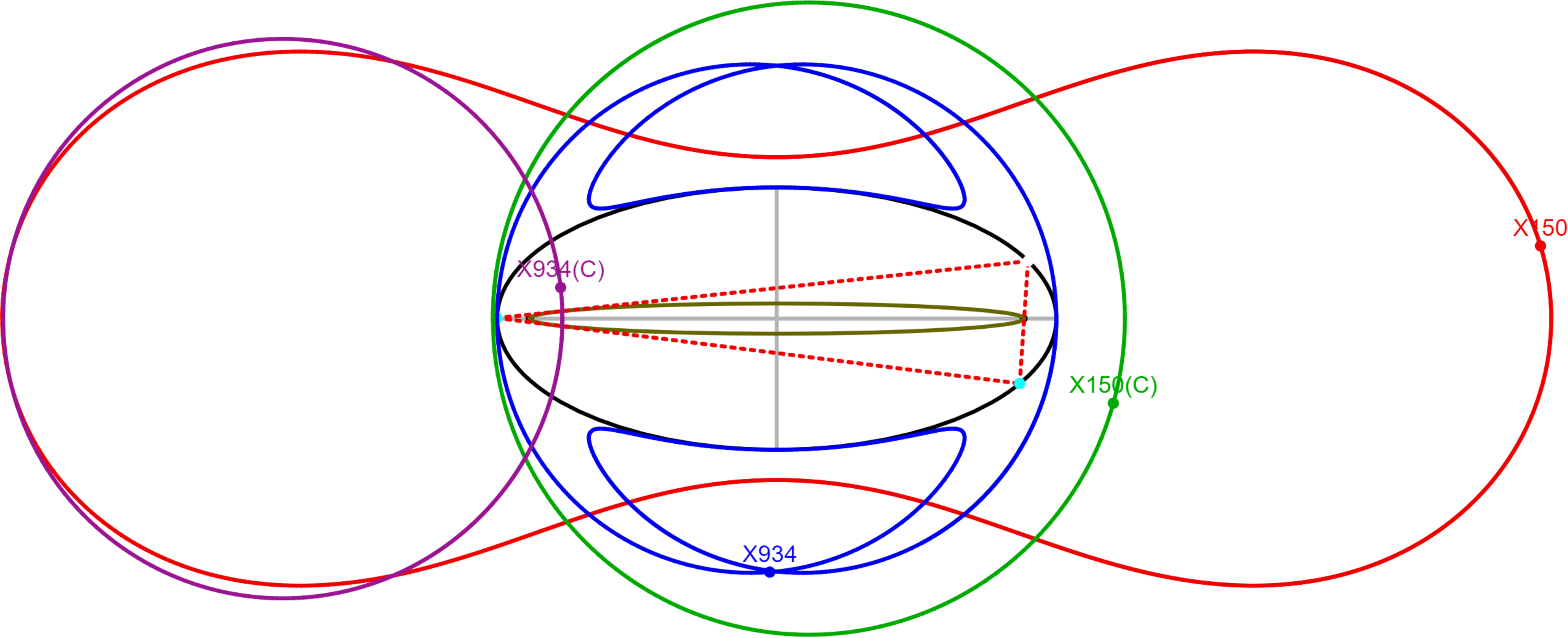}
    \caption{The locus of $X_{150}$ is a dumbbell curve (red). Curiously, the locus of $X_{150}$ over the focus inversives is a circle (green). Similarly, the billiard locus of $X_{934}$ (blue) is a curve with two self-intersections; its locus over the focus-inversives is a circle. \href{https://bit.ly/2IQY65u}{App}}
    \label{fig:x159-x934}
\end{figure}

Experimentally, in the range $k{\leq}1000$, if the locus of $X_k$ is an ellipse over billiard 3-periodics (excluding the cases where the locus is the billiard itself), then the locus of $X_k^\dagger$ over the focus-inversive family is a circle. Therefore:

\begin{conjecture}
If the the locus of some triangle center $X$ is an ellipse over billiard 3-periodics, then the locus of $X^\dagger$ over the inversive family is a circle.
\end{conjecture}

\subsection{Locus of Perimeter Centroid}

Let $C_0$, $C_1$, $C_2$ denote the vertex, perimeter, and area centroids of polygon, respectively. In \cite[Thm 1]{schwartz2016-com} it was shown that the loci of $C_0,C_2$ over Poncelet families are ellipses, though this not hold in general for $C_1$.

For triangles, $C_0=C_2=X_2$ and $C_1=X_{10}$ \cite[Spieker Center]{mw}. Per above we already know that the loci of $X_{2}$ and $X_{10}$ over the focus-inversive family are circles. Therefore, and referring to \cref{fig:n3-centroids}:

\begin{corollary}
The loci of the $C_i^\dagger$, $i=1,2,3$ of the focus-inversive family are circles.
\end{corollary}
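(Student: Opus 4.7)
The plan is to reduce the statement to results already proved in this section by invoking the classical identifications of the three polygonal centroids with specific Kimberling centers in the triangle case. For any triangle, the vertex centroid and the area centroid both collapse onto the ordinary centroid $X_2$ (this is immediate from the fact that medians of a triangle pass through the common barycenter of its vertices), while the perimeter (wire-frame) centroid coincides with the Spieker center $X_{10}$ by the classical characterization of $X_{10}$ as the center of mass of the boundary weighted by arc length on each side.

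Having made these identifications, the corollary is essentially bookkeeping. First, apply the identification to each focus-inversive triangle: its vertex and area centroids both equal $X_2^\dagger$, and its perimeter centroid equals $X_{10}^\dagger$. Second, invoke the propositions above: the locus of $X_2^\dagger$ was exhibited as an explicit circle (center $C_2^\dagger$ and radius $R_2^\dagger$ given in closed form), and $X_{10}$ appears in the list of 28 triangle centers in the main Theorem whose focus-inversive locus is a circle. Hence each of the three centroid loci coincides with a curve already shown to be a circle.

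There is no serious obstacle here; the substance lies entirely in the preceding propositions and in the Theorem listing the $28$ circular loci. The only mild point worth spelling out is that the Spieker-center identification $C_1 = X_{10}$ is \emph{exact} for triangles (not merely a generic-polygon approximation), which can be verified by writing the side midpoints $M_i$ weighted by opposite side lengths and observing that the resulting weighted centroid satisfies the defining incircle-of-the-medial-triangle property of $X_{10}$. Once this is in hand, the corollary follows by direct substitution, and the explicit center and radius of each of the three loci can if desired be read off from the formulas for $C_2^\dagger, R_2^\dagger$ (for the vertex and area centroids) together with the formulas for $C_{10}^\dagger, R_{10}^\dagger$ (for the perimeter centroid), the latter being obtainable by the same CAS pipeline described in the preliminaries.
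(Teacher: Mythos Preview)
Your proposal is correct and follows essentially the same approach as the paper: identify the vertex and area centroids of a triangle with $X_2$ and the perimeter centroid with the Spieker center $X_{10}$, then invoke the previously established circularity of the loci of $X_2^\dagger$ and $X_{10}^\dagger$. The paper states this reduction in one line immediately before the corollary, whereas you spell out the Spieker identification in slightly more detail, but the logical content is identical.
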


\begin{figure}
    \centering
    \includegraphics[width=.7\textwidth]{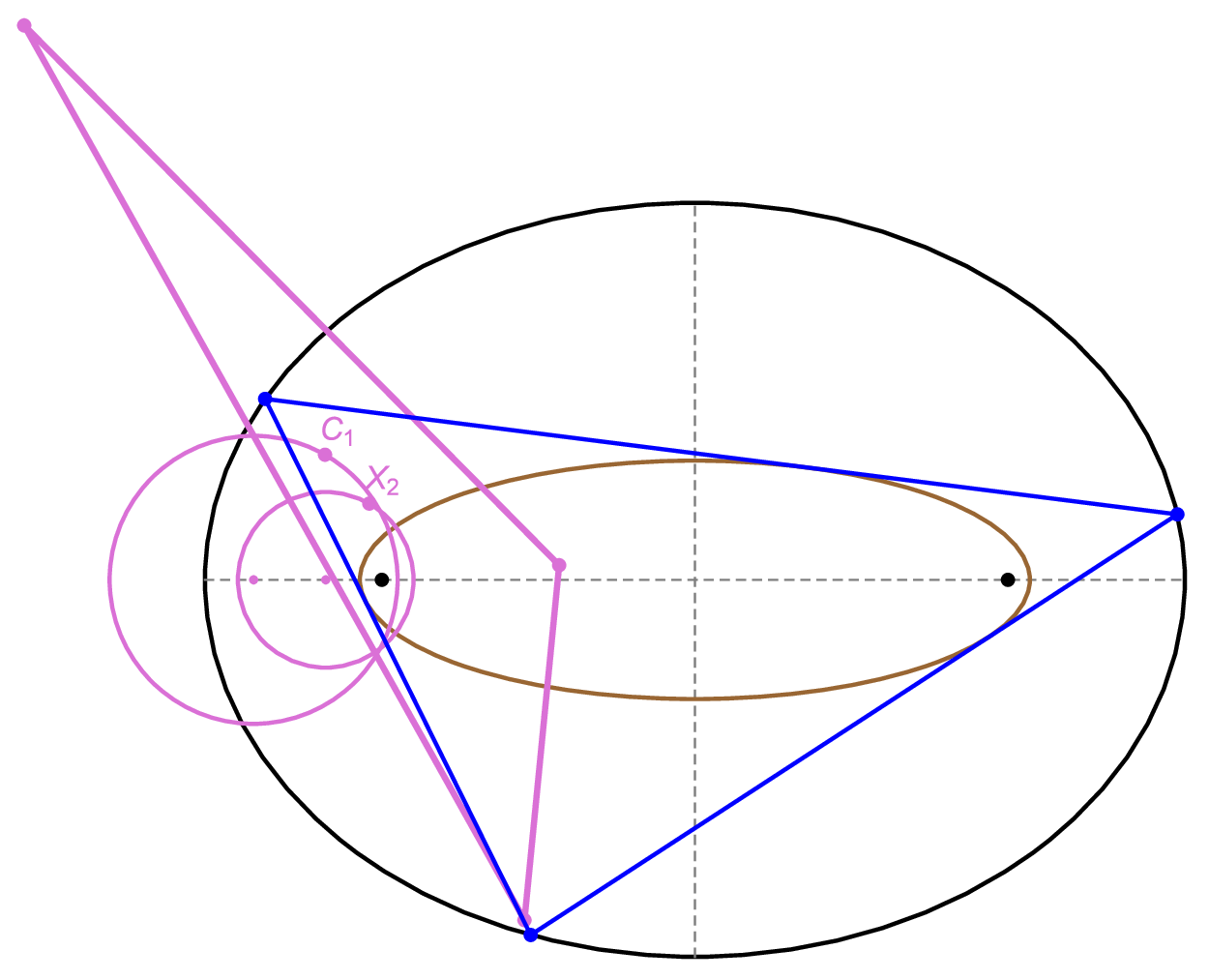}
    \caption{Circular locus of the focus-inversive $X_2^\dagger$ and the perimeter centroid $C_1^\dagger=X_{10}^\dagger$. Note that for triangles, the former coincides with both the vertex and area centroids. \href{https://bit.ly/3kYnGlP}{App}}
    \label{fig:n3-centroids}
\end{figure}

Surprisingly, experiments suggest:

\begin{conjecture}
Over the focus-inversive family derived from billiard $N$-periodics, $N{\geq}3$, the locus of vertex, perimeter, and area centroids are all circles.
\end{conjecture}

\section{The Center-Inversive Family}
\label{sec:n3-ctr-inv}
Let the {\em center-inversive triangle} have vertices 
%$P_i^\odot$
at the inversions of the $P_i$ w.r.t. unit-radius circle concentric with the elliptic billiard; see \cref{fig:center-inversive}. Therefore this family is inscribed in Booth's curve (the inverse of an ellipse with respect to a concentric circle) \cite{ferreol2020-booth}. As shown in \cref{fig:n3-ctr-caustics}, as $a/b$ increases, the caustic transitions from a convex-regular curve, to one with two self-intersections and cusps, to a non-compact one. 

\begin{proposition}
The locus of the circumcenter $X_3^\odot$ of the center-inversive triangle is an ellipse concentric, axis-aligned, and homothetic to the elliptic locus of $X_3$ of the billiard 3-periodics, at ratio $1/\delta$. Moreover, the aspect ratio of the locus of $X_3^\odot$ is the reciprocal of that of the caustic.
\end{proposition}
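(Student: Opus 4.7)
The plan is to reduce the assertion to a single scalar invariant of the 3-periodic family. First I would exploit the fact that inversion $\iota$ through the unit circle centered at the billiard center $O$ is conformal and sends circles not through $O$ to circles; since $\iota$ maps the three vertices $P_i$ of a billiard 3-periodic to the three vertices $P_i^\odot$ of the center-inversive triangle, it sends the circumcircle $\mathcal{K}$ of $P_1P_2P_3$ onto the circumcircle $\mathcal{K}^\odot$ of $P_1^\odot P_2^\odot P_3^\odot$. Applying the standard formula for the center of a circle under inversion through the origin with unit radius yields
\[
X_3^\odot \;=\; \frac{X_3}{|X_3|^2 - R^2},
\]
where $X_3$ and $R$ are the circumcenter and circumradius of the underlying 3-periodic.

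The core step is to prove the scalar identity
\[
|X_3|^2 - R^2 \;=\; -\delta,
\]
i.e., that the power of $O$ with respect to $\mathcal{K}$ is constant across the family. I would establish this via the paper's standard CAS pipeline described in Section~\ref{sec:prelims}: parametrize an isosceles 3-periodic using Appendix~\ref{app:vertices-caustics}, form $X_3$ and $R$ symbolically, expand and simplify $|X_3|^2 - R^2$, and verify the result numerically across several aspect ratios $a/b$ and non-isosceles samples to guard against isosceles coincidences. Granted this identity, the map $X_3 \mapsto X_3^\odot$ collapses to the homothety $X_3 \mapsto -X_3/\delta$ centered at $O$ and axis-aligned; the $X_3^\odot$-locus is therefore the image of the known elliptic $X_3$-locus under this homothety, which is an ellipse concentric with, axis-aligned with, and of semi-axes $1/\delta$ times those of the $X_3$-locus.

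For the aspect-ratio claim I would insert the closed-form semi-axes of the $X_3$-ellipse from \cite{garcia2020-ellipses} together with the semi-axes $a_c,b_c$ of the caustic (Appendix~\ref{app:vertices-caustics}), and verify algebraically that $a_{X_3}/b_{X_3} = b_c/a_c$. Since homothety preserves aspect ratios, the same reciprocal relation transfers verbatim to the $X_3^\odot$-locus.

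The main obstacle is the power identity $|X_3|^2 - R^2 = -\delta$. Its CAS verification is routine, but a conceptual argument — for instance, exhibiting the circumcircles of billiard 3-periodics as members of a coaxial pencil whose radical axis passes through $O$, which would follow from a Poncelet-style analysis combined with the confocality of the caustic — would be substantially more illuminating and is where I would invest additional effort beyond mere symbolic verification.
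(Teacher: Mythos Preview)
Your proposal is correct and follows essentially the same route as the paper: inversion sends the 3-periodic circumcircle to the center-inversive circumcircle, the center transforms by $X_3^\odot = X_3/(|X_3|^2-R^2)$, the power $|X_3|^2-R^2$ is the constant $-\delta$, and hence $X_3^\odot$ is a homothetic image of $X_3$. The only difference is bookkeeping: the paper does not re-derive the power identity by CAS but cites it directly from \cite[Thm~3]{garcia2020-new-properties}, and it obtains the aspect-ratio claim by citing from \cite{garcia2019-incenter} that the $X_3$-locus is already similar to the caustic rotated by $\pi/2$, rather than comparing closed-form semi-axes as you propose.
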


\begin{proof}
The circumcircle of the center-inversive triangle is the inversion of the circumcircle of the billiard's 3-periodic. Denote by $R$ and $X_3$ the circumradius and circumcenter of the 3-periodic, and by $X_3^\odot$ the circumcenter of the center-inversive triangle. By properties of circle inversion, we have that the center of the billiard $O$, $X_3$, and $X_3^\odot$ are collinear, with $O X_3^\odot=\frac{O X_3}{{O X_3}^2-R^2}$ (recall that the inversion of the center of a circle does not coincide with the center of the inverted circle). The quantity ${O X_3}^2-R^2$ represents the power of point $O$ with respect to the circumcircle of the 3-periodic. In \cite[Thm 3]{garcia2020-new-properties}, it was shown that the power of the billiard center with respect to this circumcircle is constant over the family of 3-periodics and equal to $-\delta$. Moreover, in \cite{garcia2019-incenter}  it was proved that the locus of $X_3$ is an ellipse axis-aligned and concentric with the billiard and similar to the caustic (rotated by $\pi/2$). Thus, the locus of $X_3^\odot$ must be an ellipse concentric, axis-aligned, and homothetic to that one at a ratio of 1/$\delta$.
\end{proof}

Experimentally, out of $X_k$, $k=1{\ldots}1000$, only $X_3$ sweeps a conic locus over the center-inversive family. Therefore:

\begin{conjecture}
The circumcenter $X_3$ is the only triangle center whose locus over the center-inversive family is a conic.
\end{conjecture}

\begin{figure}
    \centering
    \includegraphics[width=.7\textwidth]{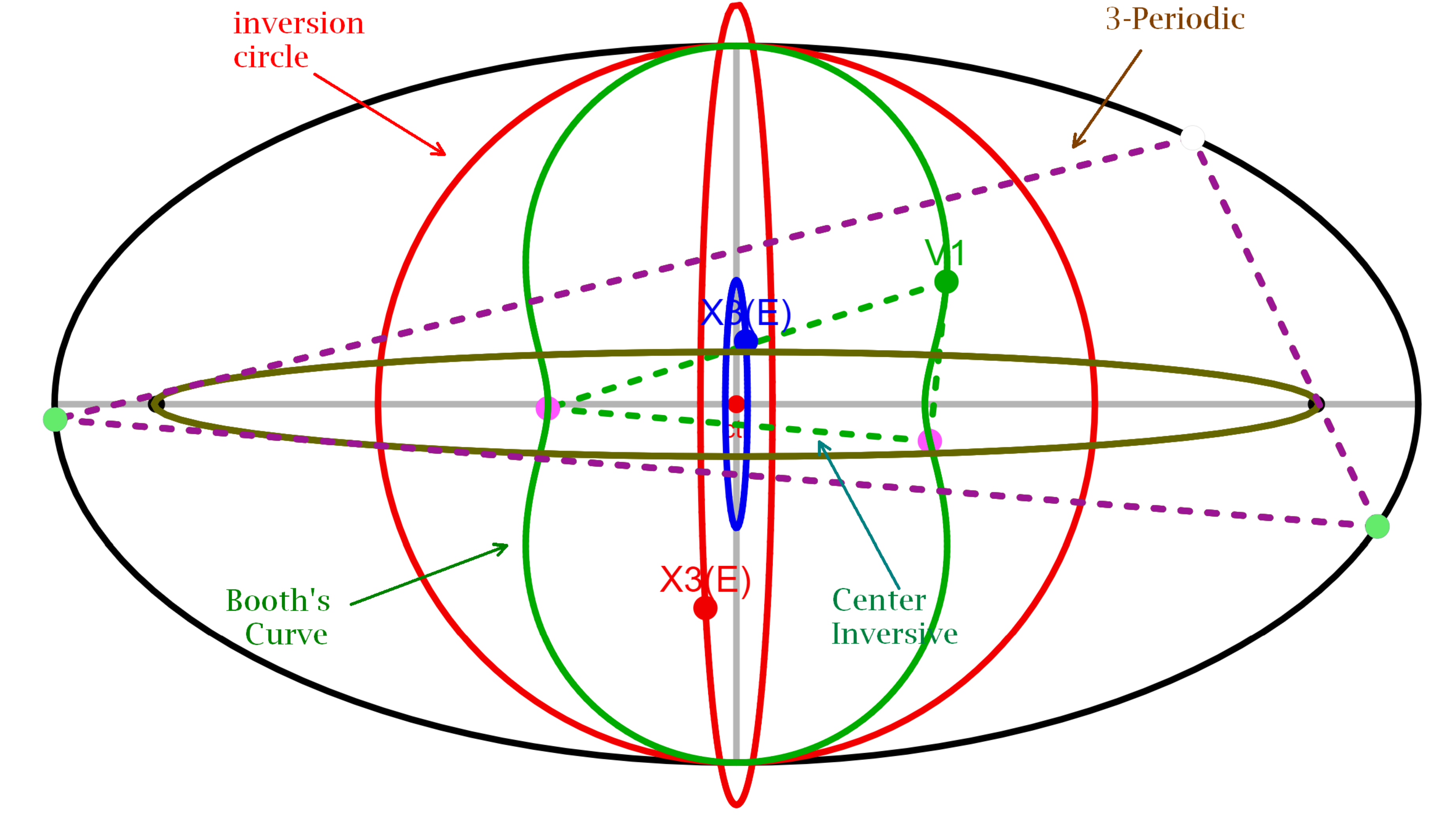}
    \caption{The center-inversive triangle (green) has vertices at the inversions of the 3-periodic vertices wrt unit-circle (red) concentric with the ellipse. It is inscribed in Booth's curve (green) \cite{ferreol2020-booth}. Over the 3-periodics, the locus of $X_3$ is an ellipse (red) \cite{garcia2019-incenter}. The locus of $X_3$ of the center-inversives is also an ellipse (blue), concentric, axis-aligned, and homothetic to the former at ratio $1/\delta$. \href{https://bit.ly/3lZbzX1}{App}}
    \label{fig:center-inversive}
\end{figure}

\begin{figure}
    \centering
    \includegraphics[width=\textwidth]{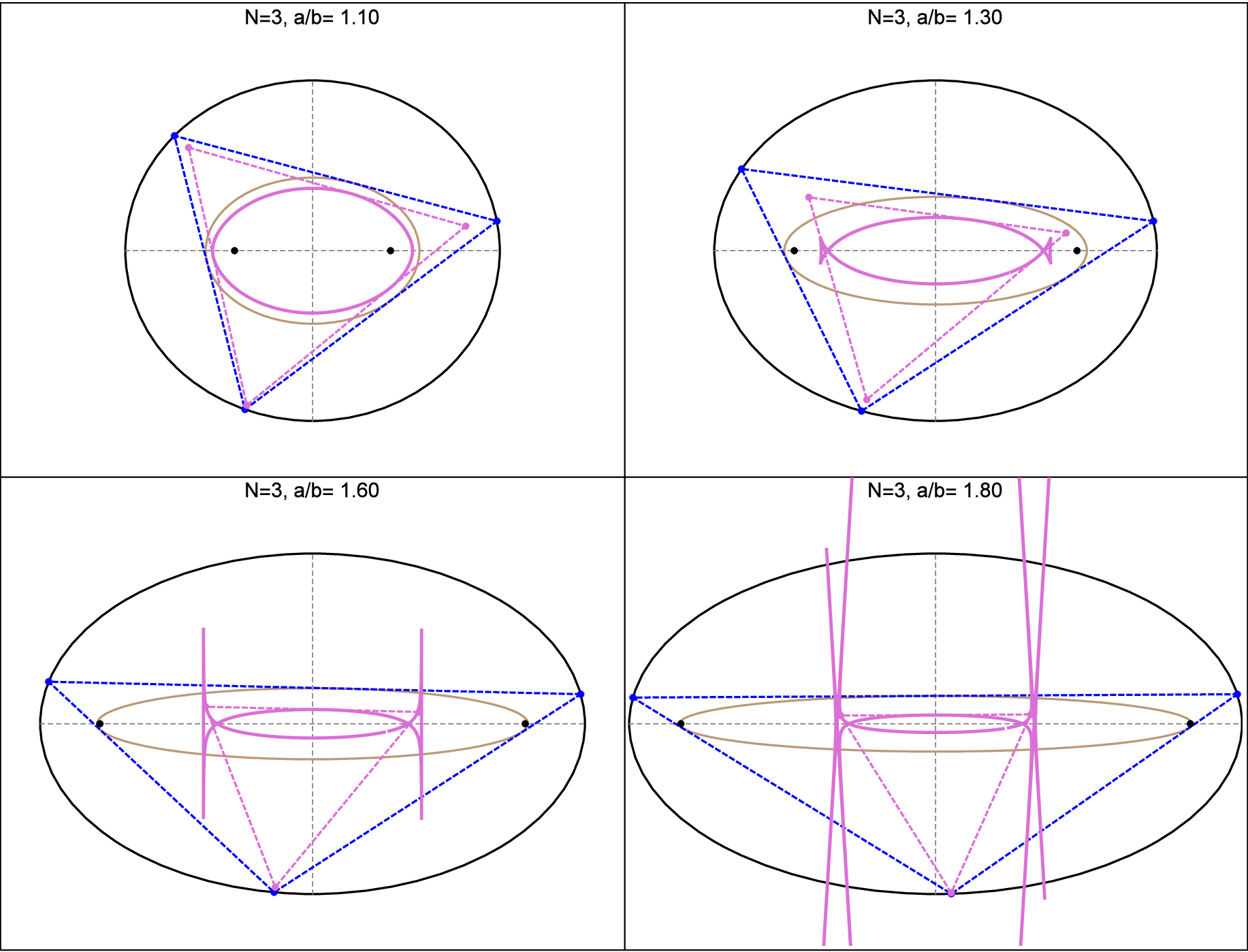}
    \caption{Non-conic caustic (pink) to the center-inversive family (dashed pink). As $a/b$ increases it transitions from (i) a regular-convex curve, to (ii) one with two self-intersections and four cusps, to (iii) a non-compact, self-intersected curve. \href{https://bit.ly/2HAk2kC}{App1}, \href{https://bit.ly/39aT3Y5}{App2}}
    \label{fig:n3-ctr-caustics}
\end{figure}

%\clearpage
\section{Videos}
\label{sec:videos}
Animations illustrating some focus-inversive phenomena are listed on \cref{tab:playlist}.

\begin{table}[H]
\small
\begin{tabular}{|c|l|l|}
\hline
id & Title & \textbf{youtu.be/<...>}\\
\hline
01 & {N=5 invariants} &
\href{https://youtu.be/wkstGKq5jOo}{\texttt{wkstGKq5jOo}}\\
02a & {N=3 circumbilliard I} & 
\href{https://youtu.be/LOJK5izTctI}{\texttt{LOJK5izTctI}}\\
02b & {N=3 circumbilliard II} & 
\href{https://youtu.be/Y-j5eXqKGQE}{\texttt{Y-j5eXqKGQE}}\\
03 & {N=3 invariant area product} & 
\href{https://youtu.be/0L2uMk2xyKk}{\texttt{0L2uMk2xyKk}}\\
04 & {N=5 invariant area product} &
\href{https://youtu.be/bTkbdEPNUOY}{\texttt{bTkbdEPNUOY}}\\
05 & {N=3 equal-area pedals} &
\href{https://youtu.be/0L2uMk2xyKk}{\texttt{0L2uMk2xyKk}}\\
\hline
06a & {N=3 circular loci I} &
\href{https://youtu.be/OAD2hpCRgCI}{\texttt{OAD2hpCRgCI}} \\
06b & {N=3 circular loci II} &
\href{https://youtu.be/tKB-50zW8F4}{\texttt{tKB-50zW8F4}} \\
06c & {N=3 circular loci III} &
\href{https://youtu.be/srjm23nQbMc}{\texttt{srjm23nQbMc}}\\
\hline
\end{tabular}
\caption{Videos of some focus-inversive phenomena. The last column is clickable and provides the YouTube code.}
\label{tab:playlist}
\end{table}

\section*{Acknowledgments}
We would like to thank Arseniy Akopyan, Peter Moses, and Pedro Roitman for useful insights.

The second author is fellow of CNPq and coordinator of Project PRONEX/ CNPq/ FAPEG 2017 10 26 7000 508.

\appendix

\section{Review: Elliptic Billiard}
\label{app:invariants}
Joachimsthal's Integral expresses that every trajectory segment is tangent to a confocal caustic \cite{sergei91}. Equivalently, a positive quantity $J$ remains invariant at every bounce point $P_i=(x_i,y_i)$:

\begin{equation*}
 J=\frac{1}{2}\nabla{f_i}.\hat{v}=\frac{1}{2}|\nabla{f_i}|\cos\alpha
 %\label{eqn:joachim}
\end{equation*}

\noindent where $\hat{v}$ is the unit incoming (or outgoing) velocity vector, and:

\begin{equation*}
\nabla{f_i}=2\left(\frac{x_i}{a^2}\,,\frac{y_i}{b^2}\right).
%\label{eqn:fnable}
\end{equation*}

Hellmuth Stachel contributed \cite{stachel2020-private} an elegant expression for Joachmisthal's constant $J$ in terms of EB semiaxes $a,b$ and the major semiaxes $a''$ of the caustic:

\begin{equation*}
    J = \frac{\sqrt{a^2 - a''^2}}{{a}{b}}
\end{equation*}

\noindent The signed area of a polygon is given by the following sum of cross-products \cite{preparata1988}:

\[  A=\frac{1}{2}\sum_{i=1}^N
%\det\begin{bmatrix} 
%x_{i+1} & x_{i} & x_{i-1} \\
%y_{i+1} & y_{i} & y_{i-1} \\
%1 & 1 & 1 \\
%\end{bmatrix}
%\quad
{(P_{i+1}-P_{i})\times(P_i-P_{i+1})} \]

Let $d_{j,i}$ be the distance $|P_i-f_j|$. The inversion $P_{j,i}^\dagger$ of vertex $P_i$ with respect to a circle of radius $\rho$ centered on $f_j$ is given by:

\[ P_{j,i}^\dagger=f_j+ \left(\frac{\rho}{d_{j,i}}\right)^2 (P_i-f_j)\]

\subsection*{N=3 case}

For $N=3$ the following explicit expressions for $J$ and $L$ have been derived \cite{garcia2020-new-properties}:

\begin{align*}
J=&\frac{\sqrt{2\delta-a^2-b^2}}{c^2} \\ %\nonumber\\
L=&2(\delta+a^2+b^2)J %\label{eqn:lj}
\end{align*}

\noindent When $a=b$, $J=\sqrt{3}/2$ and when $a/b{\to}\infty$, $J{\to}0$. 

%\section{Vertices \& Caustics N=3}
%\label{app:vertices-caustics}
%\input{220_app_n3}

%\section{Focus-Inversive Gergonne Point}
%\label{app:gergonne}
%\input{225_app_x7}

\section{Table of Symbols}
\label{app:symbols}
\begin{table}[H]
%\small
\begin{tabular}{|c|l|l|}
\hline
symbol & meaning \\
\hline
$O,N$ & center of billiard and vertex count \\
$L,J$ & perimeter and Joachimsthal's constant \\
$a,b$ & billiard major, minor semi-axes \\
$a'',b''$ & caustic major, minor semi-axes\\
$f_1,f_2$ & foci \\
$P_i,\theta_i$ &
$N$-periodic vertices and angle \\
$d_{j,i}$ & distance $|P_i-f_j|$ \\
$A$ & $N$-periodic area \\
\hline
$\rho$ & radius of the inversion circle \\
$P_{j,i}^{\dagger}$ & vertices of the inversive polygon wrt $f_j$ \\
$L_j^\dagger,A_j^\dagger$ & perimeter, area of inversive polygon wrt $f_j$\\
$\theta_{j,i}^\dagger$ & ith angle of inversive polygon wrt $f_j$ \\
\hline
\end{tabular}
\caption{Symbols used in the invariants. Note $i\in[1,N]$ and $j=1,2$.}
\label{tab:symbols}
\end{table}

\bibliographystyle{maa}
\bibliography{elliptic_billiards_v3,authors_rgk_v3} 

\begin{thebibliography}{10}
\expandafter\ifx\csname urlstyle\endcsname\relax
 \providecommand{\url}[1]{doi:\discretionary{}{}{}#1}\else
 \providecommand{\url}{doi:\discretionary{}{}{}\begingroup
  \urlstyle{rm}\Url}\fi

\bibitem{akopyan12}
Akopyan, A. (2012).
\newblock Conjugation of lines with respect to a triangle.
\newblock \emph{Journal of Classical Geometry}, 1: 23--31.

\bibitem{bialy2020-invariants}
Bialy, M., Tabachnikov, S. (2020).
\newblock {Dan Reznik's} identities and more.
\newblock \emph{Eur. J. Math.}
\newblock {doi:}10.1007/s40879-020-00428-7.

\bibitem{ferreol2020-booth}
Ferréol, R. (2020).
\newblock Booth's curve.
\newblock Mathcurve Portal.
\newblock \url{https://mathcurve.com/courbes2d.gb/booth/booth.shtml}.

\bibitem{ferreol2020-limacon}
Ferréol, R. (2020).
\newblock Pascal's limaçon.
\newblock Mathcurve Portal.
\newblock \url{https://mathcurve.com/courbes2d.gb/limacon/limacon.shtml}.

\bibitem{garcia2019-incenter}
Garcia, R. (2019).
\newblock Elliptic billiards and ellipses associated to the 3-periodic orbits.
\newblock \emph{American Mathematical Monthly}, 126(06): 491--504.

\bibitem{garcia2020-self-intersected}
Garcia, R., Reznik, D. (2020).
\newblock Invariants of self-intersected and inversive {N}-periodics in the
  elliptic billiard.
\newblock {arXiv:}2011.06640.

\bibitem{garcia2020-ellipses}
Garcia, R., Reznik, D., Koiller, J. (2020).
\newblock Loci of 3-periodics in an elliptic billiard: why so many ellipses?
\newblock {arXiv:}2001.08041.

\bibitem{garcia2020-new-properties}
Garcia, R., Reznik, D., Koiller, J. (2020).
\newblock New properties of triangular orbits in elliptic billiards.
\newblock \emph{Amer. Math. Monthly}, to appear.

\bibitem{kaloshin2018}
Kaloshin, V., Sorrentino, A. (2018).
\newblock On the integrability of {B}irkhoff billiards.
\newblock \emph{Phil. Trans. R. Soc.}, A(376).

\bibitem{kimberling1993_rocky}
Kimberling, C. (1993).
\newblock Triangle centers as functions.
\newblock \emph{Rocky Mountain J. Math.}, 23(4): 1269--1286.
\newblock \url{doi.org/10.1216/rmjm/1181072493}.

\bibitem{etc}
Kimberling, C. (2019).
\newblock Encyclopedia of triangle centers.
\newblock \url{faculty.evansville.edu/ck6/encyclopedia/ETC.html}.

\bibitem{odehnal2011-poristic}
Odehnal, B. (2011).
\newblock Poristic loci of triangle centers.
\newblock \emph{J. Geom. Graph.}, 15(1): 45--67.

\bibitem{preparata1988}
Preparata, F., Shamos, M. (1988).
\newblock \emph{Computational Geometry - An Introduction}.
\newblock Berlin: Springer-Verlag, 2nd ed.

\bibitem{reznik2021-circum}
Reznik, D., Garcia, R. (2021).
\newblock Circuminvariants of 3-periodics in the elliptic billiard.
\newblock \emph{Intl. J. Geometry}, 10(1): 31--57.

\bibitem{reznik2020-ballet}
Reznik, D., Garcia, R., Koiller, J. (2020).
\newblock The ballet of triangle centers on the elliptic billiard.
\newblock \emph{Journal for Geometry and Graphics}, 24(1): 79--101.

\bibitem{reznik2020-intelligencer}
Reznik, D., Garcia, R., Koiller, J. (2020).
\newblock Can the elliptic billiard still surprise us?
\newblock \emph{Math Intelligencer}, 42: 6--17.

\bibitem{reznik2020-invariants}
Reznik, D., Garcia, R., Koiller, J. (2021).
\newblock Fifty new invariants of {N}-periodics in the elliptic billiard.
\newblock \emph{Arnold Math. J.}
\newblock {doi}:10.1007/s40598-021-00174-y.

\bibitem{roitman2021-bicentric}
Roitman, P., Garcia, R., Reznik, D. (2021).
\newblock New invariants of {Poncelet-Jacobi} bicentric polygons.
\newblock {arXiv:}2103.11260.

\bibitem{olga14}
Romaskevich, O. (2014).
\newblock On the incenters of triangular orbits on elliptic billiards.
\newblock \emph{Enseign. Math.}, 60(3-4): 247--255.

\bibitem{schwartz2016-com}
Schwartz, R., Tabachnikov, S. (2016).
\newblock Centers of mass of {P}oncelet polygons, 200 years after.
\newblock \emph{Math. Intelligencer}, 38(2): 29--34.

\bibitem{stachel2020-private}
Stachel, H. (2020).
\newblock Expression for {Joachimsthal}'s constant.
\newblock Private Communication.

\bibitem{sergei91}
Tabachnikov, S. (2005).
\newblock \emph{Geometry and Billiards}, vol.~30 of \emph{Student Mathematical
  Library}.
\newblock Providence, RI: American Mathematical Society.
\newblock \url{bit.ly/2RV04CK}.

\bibitem{mw}
Weisstein, E. (2019).
\newblock Mathworld.
\newblock \url{mathworld.wolfram.com}.

\end{thebibliography}

\end{document}